\documentclass[12pt]{article}
\usepackage[utf8]{inputenc}
\usepackage{amsmath}
\newcommand{\N}{\mathbb{N}}
\newcommand{\R}{\mathbb{R}}
\newcommand{\Z}{\mathbb{Z}}
\newcommand{\Q}{\mathbb{Q}}

\newcommand{\e}{\varepsilon}
\usepackage{relsize}
\usepackage{amsthm}
\usepackage{amssymb}
\usepackage[margin=1in]{geometry}
\newtheorem{lemma}{Lemma}

\newtheorem*{question*}{Question}
\newtheorem{theorem}{Theorem}
\newtheorem{corollary}{Corollary}
\numberwithin{equation}{section}
\numberwithin{lemma}{section}
\numberwithin{proposition}{section}
\numberwithin{corollary}{section}
\numberwithin{theorem}{section}
\usepackage{ccfonts}
\usepackage[T1]{fontenc}
\DeclareFontSeriesDefault[rm]{bf}{sbc}
\usepackage{bm}
\usepackage[parfill]{parskip}
\usepackage{titlesec}
\usepackage{stmaryrd}
\titleformat{\section}{\centering\normalfont\scshape}{\thesection}{1em}{}
\titleformat{\subsection}{\centering\normalfont\scshape}{\thesubsection}{1em}{}

\usepackage{hyperref}
\hypersetup{
    colorlinks=true,
    linkcolor=blue,
    filecolor=magenta,      
    urlcolor=cyan,
    pdftitle={Overleaf Example},
    pdfpagemode=FullScreen,
    }
\title{\Large Pseudo-Effectivity in the Skolem--Mahler--Lech Theorem}
\author{Victor Shirandami\footnote{Institut de Recherche en Informatique Fondamentale (IRIF)\\ Email: shirandami@irif.fr}}
\date{}

\begin{document}

\maketitle
\newcommand{\Gm}{\mathbb{G}_m}
\renewcommand{\L}{\mathcal{L}}
\begin{abstract}
The Skolem--Mahler--Lech Theorem asserts that a non-degenerate linear recurrence sequence (LRS) has only finitely many zeros, but is ineffective in the sense that no general procedure is known to determine them. In this paper a \emph{pseudo-effective} approach is introduced: rather than seeking effective bounds for every sequence, one obtains explicit bounds valid for all but a controlled number of sequences of bounded height, up to a natural equivalence. Such results are established for order $3$ LRS's, and extended to all higher orders subject to suitable height constraints on the characteristic roots. The underlying mechanism is a quantitative theory for exponential-polynomial equations subject to low-height multiplicative perturbations. In particular, it is shown that the finiteness phenomenon underlying the Skolem--Mahler--Lech Theorem persists under arithmetic perturbations whose height is permitted to grow linearly at a sufficiently small rate. As a further application, a moving-target finiteness result is established for linear orbits entering expanding arithmetic neighbourhoods of proper linear subspaces.

\end{abstract}

\section{Introduction}\label{introduction}
The celebrated Skolem--Mahler--Lech Theorem determines the structure of the set of zeros of a linear recurrence sequence (LRS) over a field of characteristic zero: it is a finite union of arithmetic progressions together with a finite set. Recall that an LRS is a sequence $(a_n)_{n \geq 0}$ subject to a relation of the form
\begin{equation}\label{LRS}
    \forall \; n \geq r: \quad a_n = u_1 a_{n-1} + \dots + u_r a_{n-r},
\end{equation}
with $u_1,...,u_r$ some elements of a fixed field of characteristic zero. At the heart of the theory of linear recurrence sequences is the presence of \emph{exponential polynomials}. These are expressions of the form
\begin{equation}\label{exponential polynomial}
    c_1(n) \alpha_1^n + \dots + c_k(n) \alpha_k^n
\end{equation}
where each coefficient $c_1(n),\dots, c_k (n)$ is a polynomial function. The connection between LRS's and exponential polynomials is realised by ascribing to the linear recurrence relation (\ref{LRS}) its corresponding \emph{characteristic polynomial}
\begin{equation*}
    f(X) := X^r - u_1 X^{r-1} - \dots - u_r.
\end{equation*}
From this one finds a representation of the LRS $(a_n)_{n \geq 0}$ in the form (\ref{exponential polynomial}) where $\alpha_1,\dots,\alpha_k$ are the roots of $f(X)$ and the degree of the $j$-th polynomial coefficient $c_j(n)$ is precisely the multiplicity of the corresponding characteristic root $\alpha_j$ diminished by $1$. Moreover, further properties of the characteristic polynomial of $(a_n)_{n \geq 0}$ serve to describe the properties of its zero set. In particular, a refinement of the Skolem--Mahler--Lech Theorem asserts that when an LRS does not possess a root of unity among the ratios of its distinct characteristic roots, then its zero set is finite. This property of the characteristic roots is referred to as \emph{non-degeneracy}, and it is known that any LRS may be effectively decomposed into subsequences of the form $(a_{Mn + m})_{n \geq 0}$ (for $0 \leq m < M$) such that each subsequence is either identically zero or non-degenerate (cf. \cite{recurrencesequences,BerstelMignotte1976,Robba1978}). Therefore, for the purpose of studying the zeros of LRS's it suffices to investigate the zeros of exponential polynomials, as in (\ref{exponential polynomial}), under the assumption that no ratio $\alpha_i/\alpha_j$ is a root of unity for $1 \leq i < j \leq k$. In the particular instance where the characteristic roots are all simple and non-zero, one may recast the vanishing of (\ref{exponential polynomial}) in affine form:
\begin{equation}\label{affine equation}
    b_1 \beta_1^n + \dots + b_{s} \beta_{s}^n = 1
\end{equation}
with $s:= k-1$, $b_j := -c_j/c_k$, and $\beta_j := \alpha_j/\alpha_k$ for each $1 \leq j \leq s$. Equivalently, a more geometrical formulation reads:
\begin{equation*}
    (b_1 \beta_1^n, \dots, b_s \beta_s^n) \in \Pi
\end{equation*}
where 
\begin{equation}\label{Pi}
    \Pi := \big\{ \bm x \in (\overline \Q^*)^s : x_1 + \dots + x_s = 1 \big\}.
\end{equation}
A conspicuous feature of the Skolem--Mahler--Lech Theorem is its ineffectivity: there is not known an effective procedure to determine the finitely many zeros of a given non-degenerate LRS. Indeed the original proofs by Skolem, Mahler, and Lech \cite{Skolem1934,Mahler1935,Lech1953} all make use of $p$-adic analysis, offering no path back towards effective bounds. In the direction of obtaining quantitative bounds on the \emph{number} of zeros a linear recurrence sequence may have, substantial progress has been made, and in fact there exist multiplicity bounds depending only on the order of the LRS. The first of this kind is due to Schmidt \cite{Schmidt1999,Schmidt2000} and refined further in \cite{Evertse2002,AmorosoViada2011}. These methods involve the Subspace Theorem (cf. \cite[Chap. 7]{BombieriGubler2007}) and constitute alternative proofs of the Skolem--Mahler--Lech Theorem, but suffer still from the ineffectivity present in the Subspace Theorem. On the other hand, genuinely effective results have been obtained, via Baker's Theory of linear forms in logarithms of algebraic numbers (cf. \cite{Bugeaud2018}), in the low order cases. Most notably for algebraic LRS's of order $3$ and real algebraic LRS's of order $4$ effectivity was established in \cite{Mignotte1975,Shorey1984,Vereshchagin1985}, and then very recently the general algebraic order $4$ setting was settled by Bacik \cite{Bacik2025}. However, a general effective theory does not at present extend past the order $4$ context.

\subsection{The Pseudo-Effectivity Problem}

The absence of effective results beyond order 4 motivates a weaker question: rather than seeking an effective bound for every LRS, can one obtain explicit bounds valid for all but a controlled number of sequences? In broad terms, this question may be formulated as follows:

\begin{question*}[Pseudo-Effectivity Problem]
    How many algebraic LRS's of bounded height, subject to a fixed recurrence relation, admit a zero with index at least a given threshold $\nu \in \N_0$?
\end{question*}

The principal aim of this paper is to establish results of this type, which are referred to here as \emph{pseudo-effective}. This is done by developing a general quantitative result concerning low-height multiplicative perturbations of exponential-polynomial equations. As applications, pseudo-effective results for order $3$ LRS's are obtained, the framework is extended to higher-order recurrences satisfying suitable height conditions on their characteristic roots, and moving-target results for linear orbits are derived. Before developing the general theory, a particularly concrete consequence is recorded here in the order $3$ setting, where surprisingly small quantitative bounds already arise. Indeed, fix $\alpha_1, \alpha_2, \alpha_3 \in \overline \Q^*$, and with $h: \overline \Q^* \to \R_{\geq 0}$ denoting the logarithmic Weil height function (cf. \cite{BombieriGubler2007}), fix
\begin{equation*}
    \xi := \max_{1 \leq i < j \leq 3} h(\alpha_i/\alpha_j),
\end{equation*}
subject to $\xi > 0$. Consider now sequences of the form
\begin{equation}\label{simple sequences}
   a_n =  c_1 \alpha_1^n + c_2 \alpha_2^n + c_3 \alpha_3^n \quad \quad (n \in \N_0),
\end{equation}
where the coefficients  $c_1, c_2, c_3 \in \overline \Q^*$ are permitted to vary subject to
\begin{equation}
    \max_{1 \leq i < j \leq 3} h(c_i/c_j) \leq \theta,
\end{equation}
for a fixed $\theta \geq 0$. It is clear that any sequence of the form (\ref{simple sequences}) is an order $3$ LRS with simple characteristic roots. The zero sets of the sequences (\ref{simple sequences}) are not entirely independent: translating the index and rescaling the sequence by a non-zero constant produces a sequence whose zero set is a translate of the original. Such sequences are regarded as equivalent in what follows. With the above notation, a first pseudo-effective result may now be stated.
\begin{theorem}\label{Order 3 Theorem simplified}
    At most $24$ inequivalent sequences $(a_n)_{n \geq 0}$ as above possess a zero with
     \begin{equation*}
         n \geq \frac{2.8 + 6\theta}{\xi},
     \end{equation*}
     while at most $5$ possess a zero with
     \begin{equation*}
         n \geq \frac{33.1 + 73.1 \theta}{\xi} \cdot
     \end{equation*}
\end{theorem}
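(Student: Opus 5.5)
Normalise first. Dividing by $c_3\alpha_3^n$, a zero at index $n$ means
\[
x_1\beta_1^n + x_2\beta_2^n = 1, \qquad x_j := -c_j/c_3,\quad \beta_j := \alpha_j/\alpha_3 .
\]
Here $h(x_j)\le\theta$, and the $\beta_j$ are fixed. Translating the index by $m$ replaces $(x_1,x_2)$ by $(x_1\beta_1^m,x_2\beta_2^m)$, and rescaling leaves the ratios unchanged. So an equivalence class of sequences is a coset $(x_1,x_2)\cdot\langle(\beta_1,\beta_2)\rangle$. A sequence with a zero at $n$ makes the point $P:=(x_1\beta_1^n,x_2\beta_2^n)$ lie on the line $\Pi=\{y_1+y_2=1\}$. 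Inequivalent sequences with large zeros give points $P$ of $\Pi$ in distinct cosets, each of the form (a point of height $\le\theta$) $\cdot(\beta_1^n,\beta_2^n)$ with $n$ large.

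The plan is a gap principle on the line $\Pi$. Take two such solutions $P=\bm x(\bm\beta)^n$ and $P'=\bm x'(\bm\beta)^{n'}$ with $n\le n'$. Then $P'/P = (\bm x'/\bm x)\,(\bm\beta)^{n'-n}$ is a point whose height is roughly $(n'-n)\xi$ up to an error $2\theta$. Points of $\Pi$ satisfy $h(y_1)$ and $h(y_2)$ close to each other, via $h(1-y)=h(y)+O(\log 2)$. Since $\xi$ is realised by some ratio $\alpha_i/\alpha_j$, the large exponent $n\xi$ forces the heights of $P$ to be about $n\cdot h(\beta_j)$ for the relevant coordinates. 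I would then use the elementary inequality for solutions of the unit-type equation $y_1+y_2=1$ together with its companion $y_1\mu_1+y_2\mu_2=1$, where $\mu=P'/P$. This is a $2\times 2$ linear system, so $y_1,y_2$ are rational functions of $\mu_1,\mu_2$ of degree $\le 2$. That gives $h(P)\le 2h(\mu)+\log 2$, i.e. $n\xi-\theta \lesssim 2\big((n'-n)\xi+2\theta\big)+\log 2$, unless $\mu$ is degenerate ($\mu_1=\mu_2=1$, meaning the same coset). Iterating this with several solutions ordered by their zero index yields a geometric growth $n_{k+1}\ge c\,n_k$, or its failure. This is where thresholds of the shape $(a+b\theta)/\xi$ arise.

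To get the counts $24$ and $5$, I would use the generalisation to three points: for three solutions, the vectors $(1,1),(\mu_1,\mu_2),(\mu_1',\mu_2')$ must be linearly dependent, since all three points satisfy a linear equation with the same unknown $P$. That dependence is itself an equation of the form $\mu_1+\mu_2' = \dots$ in the multiplicative group generated by $\beta_1,\beta_2$ and height-$\theta$ elements. A Beukers--Schlickewei type bound counts the non-degenerate solutions of $u+v=1$ in a group of rank $r$ by $2^{8(r+1)}$; the paper's sharper small constants presumably come from its main perturbation theorem stated later. I would invoke that quantitative theorem (multiplicative perturbations of height $\le\theta$, exponent above a threshold) with $s=2$, which yields at most $24$ solution classes above the first threshold. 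Refining with a second gap step eliminates all but $5$ above the larger threshold.

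The main obstacle is the degenerate configurations. Vanishing subsums do not occur for $s=2$ since $x_j\ne0$, but $\mu$ can lie on a torsion coset, and one must show these correspond to equivalent sequences or are absorbed into the count. The explicit numerics $2.8, 6, 33.1, 73.1$ come from optimising the height inequalities ($\log 2$ terms and the factor-$2$ degree losses) in the gap argument, which I would only carry out at the end.
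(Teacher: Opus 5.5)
\textbf{The gap: there is no upper gap principle.} The only gap principle you actually establish is the \emph{lower} one. Eliminating between $y_1+y_2=1$ and $\mu_1y_1+\mu_2y_2=1$ gives $n'\geq\frac32 n-O\big((1+\theta)/\xi\big)$ for large zeros $n\le n'$; this is Lemma~\ref{lower bound on gaps}. That only says the large zeros grow geometrically. It is compatible with infinitely many of them and produces no count.

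The missing idea is an \emph{upper} gap principle: any two zeros $n<m$ above the threshold satisfy $m<(t+\frac12)n$ for an explicit $t$. In the paper this is Lemma~\ref{upper bound on gaps}, and it comes from Thue's method, not from unit-equation counting. Set $\gamma:=x_1\beta_1^n$, so the zero at $n$ reads $1-\gamma=x_2\beta_2^n$. Write $m=kn+l$ with $k=\operatorname{round}(m/n)$ and $|l|\le n/2$. Then the zero at $m$ becomes $u\gamma^k+v(1-\gamma)^k=1$, with $u,v$ of controlled height. The Beukers--Tijdeman/Beukers--Evertse auxiliary-polynomial bound (Lemma~\ref{Beukers Tijdemann lemma}) gives $h(\gamma)\le\log(6\sqrt3)+O\big((1+h(u)+h(v))/k\big)$. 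Comparing with $h(\gamma)\geq n\xi-\theta$ forces $k<t$ whenever $n\xi\geq f_{\theta,0}(t)$.

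Only the two principles together bound the number of zeros: all large zeros lie in $[n_1,(t+\frac12)n_1)$, and consecutive ones grow by a factor $\rho>1$, so there are fewer than $1+\log(t+\frac12)/\log\rho$ of them.

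Your proposed substitute does not work, for two reasons.
\begin{itemize}
\item Three vectors in a plane are always dependent, so the condition as you state it is empty. The correct condition is a $3\times3$ determinant with a column of $1$'s, i.e.\ a six-term relation.
\item More seriously, the coefficient vectors range over arbitrary points of height $\le\theta$. The group they generate together with $\bm\beta$ therefore has rank growing with the number of sequences. Any Beukers--Schlickewei or Evertse--Schlickewei--Schmidt count depends on that rank, so it gives no uniform bound, let alone $24$ or $5$.
\end{itemize}
Nor is $5$ obtained from $24$ by ``a second gap step''. Both come from the same mechanism, with $(X,t)=(24,35)$ and $(5,6.42591)$; the larger threshold allows a smaller $t$ in the upper gap principle.

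\textbf{If you quote the corollary as a black box.} If you intend to quote Corollary~\ref{corollary to main theorem} directly, your route is exactly the paper's, via Theorem~\ref{order 3 pseudo-effectivity theorem}. But then the proof \emph{is} the reduction, and three points must be made explicit.
\begin{itemize}
\item Relabel so that $h(\alpha_1/\alpha_3)=\xi$ before dividing by $c_3\alpha_3^n$. Otherwise you only know $h(\bm\beta)\geq\xi/2$, and the thresholds come out wrong.
\item The corollary is about a single-valued map $\bm F$ on indices. So you must show that no two inequivalent sequences vanish at the same index $n\geq\nu_1$. Your elimination with $n=n'$ does this, giving $n\xi\le\log2+5\theta<2.8+6\theta$ (the paper gets $\log2+4\theta$).
\item Define $\bm F(n):=\bm x\bm\beta^n$, with $\bm x$ the normalised coefficients of the unique sequence vanishing at $n$. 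This map must be injective, which follows from inequivalence together with $\bm\beta$ being non-torsion.
\end{itemize}
Then $d(\bm F(n),\bm\beta^n)=h(\bm x)\le\theta$, and the corollary yields both bounds at once. The torsion-coset degeneracies you single out as the main obstacle play no role: $h(\mu)=0$ would already give $n\xi\le\log 2+\theta$.
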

Results of this type persist at arbitrarily high orders. To illustrate this concretely, consider the following recurrence relation of order $10$:
\begin{equation}\label{high order example}
\begin{split}
&20a_n+28a_{n-1}+41a_{n-2}+46a_{n-3}+52a_{n-4}+55a_{n-5}\\ 
&+52a_{n-6}+46a_{n-7}+41a_{n-8}+28a_{n-9}+20a_{n-10}=0.
\end{split}
\end{equation}
This example is chosen because all ten of its characteristic roots have complex modulus $1$, while eight of them form a sufficiently low-height arithmetic cluster relative to a distinguished pair. The notion of a low-height cluster is made precise in \S\ref{pseudo-effectivity at higher orders}. Although sequences satisfying this recurrence lie far beyond the range for which general effective bounds are presently known, they remain accessible to the pseudo-effective theory developed below. Writing the characteristic roots as $\alpha_1,\ldots,\alpha_{10}$, consider exponential polynomials
\begin{equation}\label{order10}
a_n=c_1\alpha_1^n+\cdots+c_{10}\alpha_{10}^n
\end{equation}
whose coefficients satisfy
\begin{equation}\label{coefficient height order 10}
\max_{1\leq i<j\leq 10}h(c_i/c_j)\leq\theta.
\end{equation}
At higher orders, a coarser notion of equivalence is used, determined only by the coefficients corresponding to a distinguished pair of characteristic roots; the precise definition is given in \S \ref{pseudo-effectivity at higher orders}. With respect to this equivalence, for every $X\geq11$ there is an explicit threshold $\nu$ such that at most $X$ inequivalent sequences of the form (\ref{order10}) possess a zero with index $n\geq\nu$. For the relatively generous coefficient-height bound $\theta=10$, some resulting values are displayed below.

\begin{table}[ht]
    \centering
    \begin{tabular}{c|ccccccc}
        $X$   & $11$ & $16$ & $21$ & $26$ & $31$ & $36$ & $41$ \\
        \hline
        $\nu$ & $761$ & $127$ & $90$ & $79$ & $74$ & $72$ & $71$
    \end{tabular}
    \caption{\small{Explicit thresholds \(\nu\) for the order \(10\) recurrence (\ref{high order example}), such that at most \(X\) inequivalent sequences satisfying (\ref{coefficient height order 10}) with $\theta = 10$ possess a zero with \(n\ge\nu\).}}
    \label{tab:X-nu}
\end{table}

To put the size of the coefficient constraint into perspective, in terms of the multiplicative Weil height $H:=\exp h$, the condition \(\theta=10\) permits coefficient ratios of multiplicative height as large as
\begin{equation*}
H(c_i/c_j)\leq e^{10}\approx 2.2\times10^4.
\end{equation*}
For coefficients of smaller arithmetic complexity the bounds improve considerably, as shown in Table \ref{tab2:X-nu} for the case where $\theta = 1$. Here, one obtains $(X,\nu) = (36,10)$, which implies that every sequence of the form (\ref{order10}) satisfying (\ref{coefficient height order 10}) with $\theta = 1$ has at most $36$ zeros with index $n \geq 10$. Thus, even at order \(10\), one obtains a completely explicit and rather small bound on the number of large zeros. The arithmetic structure of the characteristic roots which makes these estimates possible, together with the derivation of the displayed numerical bounds, is given in \S \ref{pseudo-effectivity at higher orders}.

\begin{table}[ht]
    \centering
    \begin{tabular}{c|ccccccc}
        $X$   & $11$ & $16$ & $21$ & $26$ & $31$ & $36$ & $41$ \\
        \hline
        $\nu$ & $98$ & $17$ & $12$ & $11$ & $11$ & $10$ & $10$
    \end{tabular}
    \caption{\small{Explicit thresholds \(\nu\) for the order \(10\) recurrence (\ref{high order example}), such that at most \(X\) inequivalent sequences satisfying (\ref{coefficient height order 10}) with $\theta = 1$ possess a zero with \(n\ge\nu\).}}
    \label{tab2:X-nu}
\end{table}

The above results are the simplest manifestations of the pseudo-effective theory developed in \S \ref{pseudo-effective bounds in skolem's problem}, whose technical core is a general theory of zeros of perturbations of exponential polynomials, to which the remainder of this introduction is dedicated.

The starting point for this theory is a broad generalisation of equations of the form (\ref{affine equation}). Namely, the summands $b_j\beta_j^n$ are allowed multiplicative perturbations of low height. To make this notion precise it is convenient to pass to the algebraic torus $\Gm^s(\overline{\Q})$, which is the abelian group given by the set $(\overline{\Q}^*)^s$ endowed with the product
\begin{equation*}
    \forall \; \bm x = (x_1,\dots,x_s), \bm y = (y_1,\dots,y_s) \in \Gm^s(\overline \Q): \quad \bm x \bm y := (x_1 y_1, \dots, x_s y_s) \in \Gm^s(\overline \Q). 
\end{equation*}
As a particular instance, one can write $\bm b :=(b_1,\dots,b_s)$ and $\bm \beta := (\beta_1,\dots,\beta_s)$ as elements of $\Gm^s(\overline \Q)$ such that the equation (\ref{affine equation}) becomes
\begin{equation}\label{group version of affine equation}
    \bm b \bm \beta^n \in \Pi,
\end{equation}
with $\Pi$ as defined in (\ref{Pi}). Since all objects in this work are taken over the field of algebraic numbers $\overline \Q$, in all that follows $\Gm^s(\overline \Q)$ is abbreviated to $\Gm^s$. As well as a group structure, $\Gm^s$ admits natural notions of height and distance. Indeed, with $h: \overline \Q^* \to \R_{\geq 0}$ denoting the logarithmic Weil height function as before, it may be applied to $\Gm^s$ via the formula
\begin{equation}\label{affine height definition}
   \forall \; \bm x \in \Gm^s: \quad  h(\bm x) := \max_{1 \leq j \leq s} h(x_j).
\end{equation}
This in turn is used to induce a distance function on $\Gm^s$ via the group product, given by
\begin{equation}\label{affine distance definiton}
    \forall \; \bm x, \bm y \in \Gm^s: \quad d(\bm x, \bm y) := h(\bm x \bm y^{-1}).
\end{equation}
This is easily verified to be a pseudo-metric on $\Gm^s$, and is moreover a genuine metric on $\Gm^s/\mu_\infty^s$, where $\mu_\infty \leq \overline \Q^*$ denotes the subgroup of $\overline \Q^*$ consisting of all the roots of unity (or equivalently, $\mu_\infty^s$ is the torsion subgroup of $\Gm^s$). A notable use of this metric structure (for $s=1)$ is in the Allcock-Vaaler space \cite{AllcockVaaler2009} which realises the completion of $\overline \Q^*/\mu_\infty$ with respect to this metric as a Banach space over $\R$. With the above notation, equation (\ref{affine equation}) exhibits a natural generalisation:
\begin{equation}\label{generalised equation}
    \bm F(n) \in \Pi
\end{equation}
for functions $\bm F : \N_0 \to \Gm^s$ which are pointwise close to the function $n \mapsto \bm \beta^n$ with respect to the pseudo-metric $d$. The following theorem establishes a finiteness statement for the generalised exponential-polynomial equation (\ref{generalised equation}) in the low-degree case $s=2$, while permitting the arithmetic distance $d(\bm F(n),\bm \beta^n)$ to diverge to infinity. In fact, this distance is allowed to grow linearly.

\begin{theorem}\label{main theorem soft}
    Let $\bm \beta \in \Gm^2$ be non-torsion, and let $\bm F : \N_0 \to \Gm^2$ be a function satisfying
    \begin{equation*}
        \limsup_{n \to \infty} \frac{1}{n} d\big(\bm F(n), \bm \beta^n\big) < \frac{1}{9} h(\bm \beta).
    \end{equation*}
    Then
    \begin{equation*}
        \#\big( \operatorname{im}(\bm F) \cap \Pi \big) < \infty.
    \end{equation*}
\end{theorem}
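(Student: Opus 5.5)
The plan is to read the hypothesis as saying that every point of $\operatorname{im}(\bm F)\cap\Pi$ lies in a thin ``cone'' around the cyclic group $\Gamma:=\langle \bm\beta\rangle$, and then to show that the line $\Pi$ meets such a cone in only finitely many points. Fix $c$ with $\limsup_n d(\bm F(n),\bm\beta^n)/n<c<\tfrac19 h(\bm\beta)$. Then for all $n\ge n_0$ we can write $\bm F(n)=\bm\beta^n\bm u_n$ with $h(\bm u_n)\le cn$. Since $h(\bm\beta^n)=n\,h(\bm\beta)$, such points lie in $\Gamma_\varepsilon=\{\bm\gamma\bm u:\bm\gamma\in\Gamma,\ h(\bm u)\le \varepsilon h(\bm\gamma)\}$ with $\varepsilon=c/h(\bm\beta)<1/9$. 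This is exactly the setting of the Poonen/Rémond ``Mordell--Lang plus Bogomolov'' finiteness for curves that are not torus cosets; $\Pi$ is not a coset because $x_1+x_2=1$ is not binomial. Since only the explicit constant $1/9$ is at stake, I would not quote that theorem but prove the rank-one case for the line by hand.

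\textbf{Step 1 (height bookkeeping on $\Pi$).} For a solution $\bm F(n)=(x,1-x)\in\Pi$, write $x=\beta_1^n u_{1,n}$ and $1-x=\beta_2^n u_{2,n}$. Since $h(1-x)\le h(x)+\log 2$ and conversely, we get
\[
n\,\max(h(\beta_1),h(\beta_2))-cn\;\le\; h(x)+\log 2\;\le\; n\,h(\bm\beta)+cn+\log2 .
\]
Thus $h(x)\asymp n\,h(\bm\beta)$ with relative error at most $\varepsilon$. In particular, distinct large $n$ give points whose heights are strictly increasing, which supplies a basic gap structure.

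\textbf{Step 2 (eliminating $\bm\beta$ between two solutions).} Suppose $n_0\le n<m$ are two solution indices, with points $(x,1-x)$ and $(x',1-x')$. Form
\[
\bm Q:=\big(x'^{\,n}x^{-m},\ (1-x')^{n}(1-x)^{-m}\big).
\]
In this expression the $\bm\beta$-parts cancel, so $h(\bm Q)\le n\,h(\bm u_m)+m\,h(\bm u_n)\le 2cnm$. On the other hand, $\bm Q$ is the image of the point $(x,x')$ of the surface $\Pi\times\Pi$ under a monomial map of ``size'' $(n,m)$. The key claim is a lower bound of the form
\[
h(\bm Q)\ \ge\ \kappa\, nm\, h(\bm\beta)
\]
for an explicit $\kappa$, valid unless $(x,x')$ lies in a proper exceptional subvariety. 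Such a bound comes from a Bombieri--Zannier/Vojta-type estimate for the line $x+y=1$: one compares the archimedean and non-archimedean contributions of $x^m$ and $x'^{\,n}$, using that $x$ and $1-x$ cannot both be small at a place. Combining the two estimates gives $2c\ge \kappa\, h(\bm\beta)$, a contradiction once $c<\tfrac{\kappa}{2}h(\bm\beta)$. The constant $1/9$ should emerge by optimising $\kappa$ together with the $\pm\varepsilon$ losses from Step 1; these losses are why the constant is not simply $1/2$.

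\textbf{Step 3 (exceptional pairs).} The exceptional locus consists of pairs for which $x'^{\,n}x^{-m}$ and $(1-x')^n(1-x)^{-m}$ are simultaneously of small height. By Step 1 this forces a multiplicative relation between $x$ and $x'$ that is compatible with the line only if $\bm\beta$ satisfies a torsion relation. That is excluded because $\bm\beta$ is non-torsion; a degenerate rank direction, where one of $\beta_1,\beta_2$ has height zero, has to be handled separately through Step 1 alone. The conclusion is that there is at most one solution index beyond some explicit threshold. Together with the finitely many $n<n_0$, this shows that $\operatorname{im}(\bm F)\cap\Pi$ is finite.

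\textbf{Main obstacle.} The hard step is the explicit lower bound for $h(\bm Q)$ in Step 2, namely a quantitative Bogomolov-type inequality for monomial images of points of $\Pi\times\Pi$ that has a clean linear dependence on $nm$. I would first try the elementary route: split $h(\bm Q)$ place by place, and at each place use $\max(|x|_v,|1-x|_v)\ge \tfrac12$ in the archimedean case and $\ge 1$ in the non-archimedean case. This bounds $h(\bm Q)$ from below by $\tfrac12\big(m\,h(x)-n\,h(x')\big)$-type differences plus cross terms. If that does not yield a positive $\kappa$, I would fall back on an auxiliary-polynomial (Vojta-style) construction on $\Pi\times\Pi$.
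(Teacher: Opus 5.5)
There is a genuine gap, and it is the step you flag as the main obstacle. Your initial reduction (fixing $c$ and $n_0$) and the cancellation of $\bm\beta$ in $\bm Q$ are fine. The problem is that the lower bound $h(\bm Q)\ge\kappa\,nm\,h(\bm\beta)$ carries the entire theorem, and the elementary route you propose cannot give it.

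To see why, put $k:=\operatorname{round}(m/n)$ and $l:=m-kn$, and consider the rounded analogue of $\bm Q$, namely $u:=x'x^{-k}$ and $v:=(1-x')(1-x)^{-k}$. Then $ux^k+v(1-x)^k=1$ and $h(u)+h(v)\le 2k\,h(\bm u_n)+2h(\bm u_m)+2|l|\,h(\bm\beta)$.
\begin{itemize}
\item Purely local estimates built on $\max(|x|_v,|1-x|_v)\ge\tfrac12$ are of Liouville type. Applied to this equation they give essentially nothing better than $h(x)\le h(u)+h(v)+O(k)$.
\item Combined with $h(x)\ge(1-\varepsilon)nh(\bm\beta)-\log 2$, this yields a contradiction only when roughly $(4k+1)c<h(\bm\beta)$. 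So for fixed $c>0$, large ratios $m/n$ are never excluded.
\item For small $k$, the term $|l|h(\bm\beta)$, with $|l|$ up to $n/2$, is already of the same size as $nh(\bm\beta)$.
\end{itemize}
The missing ingredient is Thue's method: the auxiliary-polynomial bound of Beukers--Tijdeman and Beukers--Evertse, in the form of Lemma~\ref{fundamental lemma}. It states that $h(\gamma)\le\log(6\sqrt3)+\frac{6\log 6}{k-5}+\frac{2}{k-5}\big(h(u)+h(v)\big)$ whenever $u\gamma^k+v(1-\gamma)^k=1$ and $k\ge 6$. The factor $\frac{2}{k-5}$ exactly absorbs the linear growth of $h(u)+h(v)$ in $k$. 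The resulting condition $(1-9\delta)k>6-3\delta$ is where $1/9$ comes from: the $9$ is $5$ from $h(\bm u_n)$ plus $4$ from $h(\bm u_m)$, using $m<(k+\frac12)n$. It is not the outcome of balancing the $\pm\varepsilon$ losses of Step 1.

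Second, even granting that input, the architecture of Steps 2--3 asks for more than these tools deliver. You want a contradiction for every pair of large solutions, hence at most one solution beyond an explicit threshold.
\begin{itemize}
\item The Thue bound only excludes pairs with $\operatorname{round}(m/n)\ge t$, where $t>6$ (Lemma~\ref{upper bound on gaps}).
\item The elementary elimination of $\beta_2^n$ between the two equations, via Lemma~\ref{heights lemma}, only excludes $m/n$ below roughly $\frac{3-2\delta}{2(1+\delta)}$ (Lemma~\ref{lower bound on gaps}).
\item Neither argument excludes the ratios in between.
\end{itemize}
The paper therefore concludes by counting rather than by contradiction. Suppose $\bm F(n_1),\dots,\bm F(n_T)$ are distinct points of $\Pi$ with $\nu\le n_1<\dots<n_T$. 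Then $n_T<(t+\frac12)n_1$, while $n_{i+1}\ge\rho\,n_i$ for an explicit $\rho>1$. Hence $T<1+\log(t+\frac12)/\log\rho$; this is Theorem~\ref{main theorem hard}, from which the statement follows exactly as in your reduction.

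Two smaller points. The elimination degenerates precisely when $\bm F(m)=\bm F(n)$, which can genuinely happen with $m\neq n$ under your hypothesis. That, not a torsion relation for $\bm\beta$, is the only exceptional case, and it is why one counts image points rather than indices. Also, the claim in Step 1 that heights of solutions increase strictly with $n$ is false: the windows $[(1-\varepsilon)nh(\bm\beta)-\log 2,\,(1+\varepsilon)nh(\bm\beta)]$ for nearby $n$ overlap.
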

Theorem \ref{main theorem soft} may be viewed as a stability result for the finiteness phenomenon underlying the Skolem--Mahler--Lech Theorem. Indeed, whereas the classical setting concerns intersections of the exponential orbit $n \mapsto \bm b\bm\beta^n$ with $\Pi$, the above theorem shows that finiteness persists under arithmetic perturbations of this orbit whose height is permitted to grow linearly, at a sufficiently small rate relative to $h(\bm\beta)$. A quantitative form of this theorem is also obtained, leading to the explicit bounds in Theorem \ref{Order 3 Theorem simplified}. Due to the technicality of its statement, the quantitative form of Theorem \ref{main theorem soft} is postponed to \S \ref{low height perturbations of exponential polynomials}. The perturbative framework also admits applications beyond the pseudo-effective theory of Skolem's Problem. As an example, \S \ref{linear orbits} applies the quantitative theory to a moving-target problem in linear dynamics, establishing finiteness for linear orbits entering expanding arithmetic neighbourhoods of proper linear subspaces.

The methods used to prove Theorem \ref{main theorem soft}, and its quantitative counterpart Theorem \ref{main theorem hard}, have their roots in Thue's Method, originally devised to bound the irrationality measure of real algebraic irrational numbers \cite{thue1909}. In the work of Beukers \cite{Beukers1997} and Beukers \& Tijdeman \cite{beukersTijdemann1984}, auxiliary polynomials are explicitly constructed to obtain upper bounds for the heights of solutions of equations of the form
\begin{equation*}
uX^n + v(1-X)^n = 1
\end{equation*}
for some $u,v \in \overline \Q^*$, $n \in \N_{\geq 2}$ (see also \cite[Chap. 17]{Masser2016} for an accessible account of this equation). These methods are applied in \cite{beukersTijdemann1984, BeukersEvertse1996} to multiplicity estimates for zeros of order $3$ LRS's, while a broad generalisation of these techniques appears in \cite{Amoroso2017} in the study of parametric families of LRS's (cf. \cite{OstafeShparlinski2022}). The proof of Theorem \ref{main theorem hard} adapts these auxiliary-polynomial estimates to generalised exponential-polynomial expressions whose coefficients are subject to arithmetic perturbations.

\paragraph{Organisation of the Paper.}
The pseudo-effective theory of Skolem's Problem is developed in §\ref{pseudo-effective bounds in skolem's problem}, with the application to linear dynamics treated separately in §\ref{linear orbits}. Finally, §\ref{proof of the main theorem} is dedicated to the proof of Theorem \ref{main theorem hard}.

\paragraph{Acknowledgements.}
The author is grateful to Faustin Adiceam for inspiring the pseudo-effective approach developed in this paper, and to Valérie Berthé and Faustin Adiceam for their helpful remarks, which led to substantial improvements in the final manuscript. This work was supported by the European Research Council through ERC grant DynAMiCs (101167561).

\section{Pseudo-Effective Bounds in Skolem's Problem}\label{pseudo-effective bounds in skolem's problem}
As discussed in the Introduction, the results of this section are obtained as applications of a quantitative refinement of Theorem \ref{main theorem soft}. In \S \ref{low height perturbations of exponential polynomials} this refinement is stated, and in \S \ref{projective notations} the height and arithmetic distance functions are reformulated projectively to better suit the homogeneous exponential polynomials arising from linear recurrence sequences. Then, in \S \ref{pseudo-effectivity at order 3} and \S \ref{pseudo-effectivity at higher orders} the main pseudo-effective results are presented.

\subsection{Low Height Perturbations of Exponential Polynomials}\label{low height perturbations of exponential polynomials}
To state the quantitative theory concerning zeros of low-height perturbations of exponential polynomials, several auxiliary quantities must first be defined. First, given $\delta, \theta, X > 0$ satisfying
\begin{equation}\label{parameter constraints}
    \delta \in \left(0, \frac{1}{9}\right) \quad \text{and} \quad X > \frac{\log \left( \frac{6-3\delta}{1-9\delta} + \frac{1}{2} \right)}{\log \left(\frac{3-2\delta}{2(1+\delta)}\right)},
\end{equation}
define the interval
\begin{equation}\label{f,g domain}
    I_{\delta, X} := \left(\frac{6-3\delta}{1-9\delta}, \left(\frac{3-2\delta}{2(1+\delta)} \right)^X - \frac{1}{2} \right),
\end{equation}
which is non-empty as a consequence of the relations (\ref{parameter constraints}). With respect to these parameters, let also $f_{\theta, \delta}$ and  $g_{\theta,\delta,X}$ be functions defined on $ I_{\delta, X}$ given by
\begin{equation}\label{f function definition}
    f_{\theta,\delta} (t) := \frac{(t-5) \log(6 \sqrt{3}) + 6 \log 6 + (5t-1)\theta}{(1-9 \delta)t - (6-3\delta)},
\end{equation}
and
\begin{equation}\label{g function definition}
    g_{\theta, \delta, X} (t) := \frac{\log 2 + 4\theta}{(3-2\delta) - 2(1+\delta)(t+1/2)^{1/X}}\cdot
\end{equation}
Theorem \ref{main theorem soft} admits the following quantitative refinement.
\begin{theorem}\label{main theorem hard}
    Let $\bm \beta \in \Gm^2$ be non-torsion, and let $\bm F : \N_0 \to \Gm^2$ be a function for which there exists $\delta \in [0,1/9)$ and $\theta \geq 0$ such that
    \begin{equation*}
        \forall \; n \in \N_0: \quad d \big( \bm F(n), \bm \beta^n \big) \leq \theta + n \delta h(\bm \beta).
    \end{equation*}
    Then, for all
    \begin{equation*}
         X > \frac{\log \left( \frac{6-3\delta}{1-9\delta} + \frac{1}{2} \right)}{\log \left(\frac{3-2\delta}{2(1+\delta)}\right)},
    \end{equation*}
    it holds that
    \begin{equation*}
        \#\big( \bm F(\N_{\geq \nu}) \cap \Pi \big) \leq \lceil X \rceil,
    \end{equation*}
    with
    \begin{equation*}
        \nu = \frac{1}{h(\bm \beta)} \cdot \min_{t \in I_{\delta, X}} \max \big\{ f_{\theta, \delta}(t) ; g_{\theta, \delta, X}(t) \big\}.
    \end{equation*}
\end{theorem}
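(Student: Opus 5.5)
The plan is a Thue--Beukers--Tijdeman argument with explicit Padé approximants, where the auxiliary polynomial is built from one zero and evaluated at a later zero. Write $\bm F(n) = (x_n, y_n)$ with $x_n + y_n = 1$. Set $\bm\eta_n := \bm F(n)\bm\beta^{-n}$, so that $h(\bm\eta_n) \le \theta + n\delta h(\bm\beta)$.

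Suppose $n_0 < n_1 < \dots < n_X$ are indices $\geq \nu$ with $\bm F(n_i) \in \Pi$. The aim is to show that consecutive zeros grow geometrically, roughly
\begin{equation*}
n_{i+1} \geq \left(\tfrac{3-2\delta}{2(1+\delta)}\right) n_i
\end{equation*}
up to additive errors. The zeros must also be confined to a window: an upper bound $n_i \leq t\, n_0$ or similar, coming from a gap principle. Iterating the geometric growth $X$ times then contradicts the window exactly when $t + 1/2 < \big(\tfrac{3-2\delta}{2(1+\delta)}\big)^X$. This matches the right endpoint of $I_{\delta,X}$, and explains why the count is at most $\lceil X\rceil$.

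\textbf{Step 1: a Padé identity.} Use the Beukers--Tijdeman hypergeometric Padé approximants to $(1-z)^m$. There are polynomials $A_r, B_r$ of degree $r$ with integer coefficients of size at most $(6\sqrt3)^{r}$ or similar, together with a remainder $z^{2r+1}E_r(z)$, such that
\begin{equation*}
A_r(z) - (1-z)^m B_r(z) = z^{2r+1}E_r(z).
\end{equation*}
The constants $\log(6\sqrt3)$ and $6\log 6$ in $f_{\theta,\delta}$ strongly suggest this family with $r \approx m/t$ or $m \approx t r$.

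\textbf{Step 2: evaluation at the zeros.} From a zero at $n_0$, write $y = 1 - x$ with $x = x_{n_0}$. At a later zero $n_1 = n_0 + m$ (after absorbing $\bm\beta^m$), the relation $x_{n_1} + y_{n_1} = 1$ expresses the pair $\big(x\,\beta_1^{m}\eta,\ (1-x)\beta_2^m\eta'\big)$ as lying on the line. Substitute into the Padé identity to produce an algebraic number
\begin{equation*}
\Lambda = A_r(x) - \ldots
\end{equation*}
Non-vanishing of $\Lambda$ follows from the standard non-vanishing lemma for consecutive $r, r+1$ together with the non-torsion hypothesis on $\bm\beta$. Apply the product formula (Liouville inequality) to $\Lambda$: sum over places. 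At the places where $\bm\beta^{m}$ is small the remainder term $z^{2r+1}$ gives a saving of about $(2r+1)\cdot m\, h(\bm\beta)$. At every other place the loss is controlled by the coefficient size $r\log(6\sqrt3)$, by $r\,h(x) \lesssim r(\theta + n_0\delta h(\bm\beta))$, and by the perturbation heights, which contribute $(5t-1)\theta$-type terms and $9\delta$-type terms. Comparing gains with losses and choosing $r \approx n_0/(\ldots)$ yields a contradiction unless $n_0 h(\bm\beta) \le f_{\theta,\delta}(t)$. The denominator $(1-9\delta)t - (6-3\delta)$ is exactly the net exponent gain, so the argument requires $t$ above the left endpoint of $I_{\delta,X}$. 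Hence for $n_0 \geq \nu$ any further zero lies outside the window $n_1 \le t n_0$ in the "large gap" regime.

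\textbf{Step 3: a gap principle for nearby zeros.} For consecutive zeros without a large gap, use a simpler two-zero elimination. Take the degree-one case, $2$-adic or $\log 2$-type: from two points $x_{n}, x_{n'}$ on the line, eliminate to obtain the bound $h(\bm\beta)(n'-n)(3-2\delta) \lesssim 2(1+\delta)(\ldots) + \log 2 + 4\theta$. This produces $g_{\theta,\delta,X}$ and the ratio $\tfrac{3-2\delta}{2(1+\delta)}$.

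\textbf{Step 4: combining.} Pigeonholing the $\lceil X\rceil+1$ zeros and optimizing over $t \in I_{\delta,X}$ then gives $\nu = h(\bm\beta)^{-1}\min_t \max\{f,g\}$.

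The main obstacle is Step 2. It requires tracking precisely how the multiplicative perturbations $\bm\eta_n$, whose height grows linearly, degrade the archimedean and non-archimedean gains, and verifying non-vanishing of $\Lambda$. I would first handle $\theta = \delta = 0$ to fix the exponent bookkeeping, and then insert the perturbation heights additively via $h(ab) \le h(a) + h(b)$.
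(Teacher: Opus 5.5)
Your global architecture matches the paper's: a Thue-type argument confines all later zeros to a window $n_T<(t+\tfrac12)n_1$, a two-zero elimination gives growth $n_{i+1}\gtrsim\frac{3-2\delta}{2(1+\delta)}n_i$, and one iterates. But Step 2, which carries all the weight, cannot work as set up. Put $\bm c(q):=\bm F(q)\bm\beta^{-q}$, $\psi(q):=h(\bm c(q))$, $x:=F_1(n_0)$ and $n_1=n_0+m$. The zero at $n_1$ then reads
\[
\frac{c_1(n_1)}{c_1(n_0)}\,\beta_1^{m}\,x+\frac{c_2(n_1)}{c_2(n_0)}\,\beta_2^{m}\,(1-x)=1,
\]
which is \emph{linear} in $x$. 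The powers $x^m,(1-x)^m$ never occur, so a Pad\'e approximant to $(1-z)^m$ at $z=x$ produces no small quantity: the remainder $z^{2r+1}E_r(z)$ is small where $x$ is small, not where $\bm\beta^m$ is. Eliminating from this relation only reproduces your Step 3. Your bookkeeping also treats $h(x)$ as a perturbation size, $\lesssim\theta+n_0\delta h(\bm\beta)$. In fact $h(x)\ge n_0h(\beta_1)-\psi(n_0)$ is large, and that largeness is exactly what the Thue argument contradicts. Finally, you conclude that further zeros lie \emph{outside} the window. What is needed, and what is true, is that none do; with your conclusion Steps 3--4 give no finite count.

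The missing idea is the multiplicative reduction. Put $k:=\operatorname{round}(n_1/n_0)$ and write $n_1=kn_0+l$ with $|l|\le n_0/2$, so that $\bm\beta^{n_1}=(\bm\beta^{n_0})^k\bm\beta^{l}$. With $\gamma:=F_1(n_0)$, hence $1-\gamma=F_2(n_0)$, the relation $F_1(n_1)+F_2(n_1)=1$becomes
\[
u\gamma^k+v(1-\gamma)^k=1,\qquad u:=c_1(n_1)c_1(n_0)^{-k}\beta_1^{l},\quad v:=c_2(n_1)c_2(n_0)^{-k}\beta_2^{l},
\]
where $h(u)+h(v)\le 2k\psi(n_0)+2\psi(n_1)+|l|\big(h(\beta_1)+h(\beta_2)\big)$. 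The exponent is the ratio $k$, not the difference $n_1-n_0$. All perturbations are absorbed into $u,v$, so no place-by-place tracking is needed.

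The paper applies the Beukers--Tijdeman bound (Lemma~\ref{Beukers Tijdemann lemma}) as a black box; this is where Pad\'e approximants to $(1-z)^{2s}$ with $2s\in\{k-1,k\}$ live. It first makes a parity reduction (Lemma~\ref{fundamental lemma}, the source of the $6\log 6$ term). It then compares the resulting upper bound for $h(\gamma)$ with $h(\gamma)\ge n_0h(\bm\beta)-\psi(n_0)$, taking $h(\beta_1)=h(\bm\beta)$. Inserting $\psi(q)\le\theta+q\delta h(\bm\beta)$ and $n_1<(k+\tfrac12)n_0$, the paper turns this into $n_0h(\bm\beta)<f_{\theta,\delta}(k)$. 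Since $f_{\theta,\delta}$ is decreasing and $n_0h(\bm\beta)\ge f_{\theta,\delta}(t)$, this forces $k<t$, i.e.\ $n_1<(t+\tfrac12)n_0$ (Lemma~\ref{upper bound on gaps}).

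Two smaller repairs are needed elsewhere.
\begin{enumerate}
\item The theorem counts image points, so you must choose the $n_i$ with pairwise distinct $\bm F(n_i)$. That, not the non-torsion of $\bm\beta$, is what makes the Step 3 elimination non-zero: the perturbations can absorb $\bm\beta^{n'-n}$, so one image point can be hit by several indices (cf.\ \S\ref{linear orbits}).
\item The Step 3 inequality should read $nh(\bm\beta)\le\log2+4\theta+4\delta nh(\bm\beta)+2(1+\delta)(n'-n)h(\bm\beta)$, which is a \emph{lower} bound on $n'$. For $n\ge g_{\theta,\delta,X}(t)/h(\bm\beta)$ it gives $n'\ge(t+\tfrac12)^{1/X}n$, and iterating against the window gives at most $\lceil X\rceil$ image points.
\end{enumerate}
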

For some applications of this theorem, it is more convenient to work with a function $\bm F$ whose domain is a proper subset of $\N_0$, in which case the conclusion of the theorem still holds. Elementary considerations show that $f_{\theta,\delta}$ is strictly decreasing on $I_{\delta,X}$ and has a vertical asymptote at the left boundary, while $g_{\theta,\delta,X}$ exhibits the opposite behaviour: it is strictly increasing on $I_{\delta, X}$ and has a vertical asymptote at the right boundary. Hence the minimum
\begin{equation*}
    \min_{t \in I_{\delta, X}} \max \big\{ f_{\theta, \delta}(t) ; g_{\theta, \delta, X}(t) \big\}
\end{equation*}
is realised as the unique solution to the equation $f_{\theta, \delta}(t) = g_{\theta, \delta, X}(t)$ for $t \in I_{\delta, X}$, which may be determined by numerical methods. In the special case where $\delta = 0$ it is possible to avoid determining the intersection of the two graphs by instead choosing particular values of $t \in I_{\delta, X}$ which permit small values of the threshold $\nu$ and the counting bound $X$ respectively.

\begin{corollary}\label{corollary to main theorem}
    Let $\bm F$ and $\theta$ be as in Theorem \ref{main theorem hard} in the case where $\delta = 0$. Then
    \begin{equation*}
        \# \big( \bm F(\N_{\geq \nu_1}) \cap \Pi \big) \leq 24 \quad \operatorname{for} \quad \nu_1 = \frac{2.8 + 6 \theta}{h(\bm \beta)},
    \end{equation*}
    and
    \begin{equation*}
        \# \big( \bm F(\N_{\geq \nu_2}) \cap \Pi \big) \leq 5 \quad \operatorname{for} \quad \nu_2 = \frac{33.1 + 73.1 \theta}{h(\bm \beta)}\cdot
    \end{equation*}
\end{corollary}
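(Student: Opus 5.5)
Both bounds come from Theorem \ref{main theorem hard} with $\delta = 0$. Instead of computing the exact minimiser of $\max\{f_{\theta,0}, g_{\theta,0,X}\}$, I would evaluate both functions at one explicit point $t \in I_{0,X}$. The argument rests on the pointwise bound
\begin{equation*}
\min_{t' \in I_{0,X}} \max\{f_{\theta,0}(t'); g_{\theta,0,X}(t')\} \leq \max\{f_{\theta,0}(t); g_{\theta,0,X}(t)\}
\end{equation*}
for any chosen $t$, together with the fact that the set $\bm F(\N_{\geq \nu}) \cap \Pi$ only shrinks as $\nu$ grows. So it is enough, for each $X$, to find one $t \in I_{0,X}$ with $f_{\theta,0}(t) \leq c_1 + c_2\theta$ and $g_{\theta,0,X}(t) \leq c_1 + c_2\theta$ for every $\theta \geq 0$. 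Both $f_{\theta,0}$ and $g_{\theta,0,X}$ are affine in $\theta$, so each inequality reduces to two checks: one on the constant term and one on the coefficient of $\theta$.

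For $\delta = 0$ the relevant quantities simplify as follows:
\begin{equation*}
I_{0,X} = \big(6, (3/2)^X - 1/2\big), \qquad f_{\theta,0}(t) = \frac{(t-5)\log(6\sqrt3) + 6\log 6 + (5t-1)\theta}{t-6}, \qquad g_{\theta,0,X}(t) = \frac{\log 2 + 4\theta}{3 - 2(t+1/2)^{1/X}}.
\end{equation*}
The admissibility condition on $X$ becomes $X > \log(6.5)/\log(1.5) \approx 4.62$. Hence both $X = 24$ and $X = 5$ are allowed, and $\lceil X \rceil = X$ in each case.

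\textbf{First bound.} For the bound $24$, take $X = 24$ and $t = 35$. This lies well inside $I_{0,24}$, since $(3/2)^{24} \approx 1.68\times 10^4$. Writing $\log(6\sqrt3) \approx 2.3411$ and $\log(6\sqrt3) + 6\log 6 \approx 13.09$, one gets
\begin{equation*}
f_{\theta,0}(t) = \log(6\sqrt3) + \frac{\log(6\sqrt3) + 6\log 6}{t-6} + \Big(5 + \frac{29}{t-6}\Big)\theta.
\end{equation*}
At $t = 35$ this is $\leq 2.8 + 6\theta$. For $g$, one has $(35.5)^{1/24} \approx 1.1604$, so the denominator is about $0.679$. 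This gives $g_{\theta,0,24}(35) \approx 1.02 + 5.9\theta \leq 2.8 + 6\theta$.

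\textbf{Second bound.} For the bound $5$, take $X = 5$, so that $I_{0,5} = (6, 7.09375)$. Here the window is genuinely tight. The four required inequalities translate into constraints on $t$:
\begin{itemize}
\item the $\theta$-coefficient of $f$ is at most $73.1$ when $t \gtrsim 6.4258$;
\item the constant term of $f$ is at most $33.1$ when $t \gtrsim 6.4245$;
\item the $\theta$-coefficient of $g$ is at most $73.1$ when $(t+1/2)^{1/5} \leq 1.4726\ldots$, i.e. roughly $t \lesssim 6.426$;
\item the constant term of $g$, namely $\log 2$ over the same denominator, then automatically satisfies $\leq 33.1$.
\end{itemize}
So I would pick $t$ in this narrow admissible window, e.g. $t \approx 6.4259$. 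I would then verify the four inequalities with enough decimal precision, using rigorous rounding in the safe direction.

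\textbf{Main obstacle.} The only real difficulty is this last numerical verification for $X = 5$. The constants $33.1$ and $73.1$ appear to have been tuned so that the window for $t$ is very small, roughly of width $10^{-3}$ or less. The logarithms and fifth roots must therefore be bounded carefully, with explicit interval arithmetic, rather than estimated loosely. If the window turned out to be empty at the precision I obtain, I would recompute the endpoint of each constraint exactly and locate the $t$ that balances the two $\theta$-coefficients, since that balance is the binding one.
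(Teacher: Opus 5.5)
Your proposal is correct and is essentially the paper's proof. The paper also plugs $(X,t)=(24,35)$ and $(X,t)=(5,6.42591)$ into Theorem \ref{main theorem hard} and reads off $f_{\theta,0}(t)$ and $g_{\theta,0,X}(t)$; your $t\approx 6.4259$ lies in the same admissible window, roughly $[6.42584,\,6.42601]$. One harmless numerical slip: the constant term of $f_{\theta,0}$ drops below $33.1$ only for $t\gtrsim 6.4256$, not $6.4245$. This does not matter, because the binding lower constraint is the $\theta$-coefficient condition $t\geq 437.6/68.1\approx 6.42584$.
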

\begin{proof}
    To obtain the first bound set $(X,t) = (24,35)$, yielding the values
    \begin{equation*}
        f_{\theta,0}(t) < 2.8 + 6\theta, \quad \text{and} \quad g_{\theta, 0, X}(t) < 1.1 + 5.9 \theta.
    \end{equation*}
    Then Theorem \ref{main theorem hard} yields the claim. Likewise, to retrieve the second bound, set 
    $(X,t) = (5,6.42591)$ to obtain
    \begin{equation*}
        f_{\theta,0}(t) < 33.1 + 73.1\theta, \quad \text{and} \quad g_{\theta, 0, X}(t) < 12.7 + 73.1 \theta,
    \end{equation*}
    from which another application of Theorem \ref{main theorem hard} yields the desired result.
\end{proof}

\subsection{Height and Arithmetic Distance in Projective Space}\label{projective notations}
Whereas the results recorded in the Introduction considered functions valued in $\Gm^2$, the results in this section concern functions taking values in
\begin{equation*}
    \mathbb{P}^\circ \big( \overline \Q^3 \big) := \left\{[x_1:x_2:x_3] \in \mathbb{P} \big( \overline \Q^3 \big): \; x_1x_2x_3 \neq 0\right\};
\end{equation*}
the elements of the projective space of  $\overline \Q^3$ whose homogeneous coordinates do not lie in any proper coordinate subspace of $\overline \Q^3$. To apply Theorem \ref{main theorem hard} in this context it is necessary to devise a notion of height and distance in the projective regime, complementary to those of (\ref{affine height definition}-\ref{affine distance definiton}). It is more convenient, however, to formulate this setup within $\Gm^3$ by defining height and distance functions which are invariant with respect to the action of the subgroup $\overline  \Q^*$ on $\Gm^3$. In particular, one defines a projective invariant height function on $\Gm^3$ via the formula
\begin{equation}
\forall \; \bm x \in \Gm^3: \quad h^*(\bm x) := \max_{1 \leq i < j \leq 3} h\left( \frac{x_i}{x_j}\right).
\end{equation}
Clearly, for all non-zero algebraic numbers $\lambda$ one has $h^*(\lambda \bm x) = h^*(\bm x)$. An equivalent definition for $h^*$ may be given in terms of the mappings (for each $1 \leq i \leq 3$)
\begin{align*}
    \varphi_i : \Gm^3 &\longrightarrow \Gm^2 \\
    \bm x &\longmapsto \left( \frac{x_j}{x_i} \right)_{\substack{1 \leq j \leq 3 \\ j \neq i}}
\end{align*}
which when viewed modulo $\overline \Q^*$ may be regarded as bijections between $\mathbb{P}^\circ \big( \overline \Q^3 \big)$ and $\Gm^2$, with $\mathbb{P}^\circ \big( \overline \Q^3 \big)$ suitably identified with $\Gm^3 \big/ \overline \Q^*$.  Then
\begin{equation*}
    \forall \;\bm x \in \Gm^3: \quad h^*(\bm x) = \max_{1 \leq i \leq 3} h \big(\varphi_i(\bm x)\big).
\end{equation*}
With respect to this projective invariant height, one may induce a distance function
\begin{equation*}
    \forall \; \bm x, \bm y \in \Gm^3: \quad d^*(\bm x, \bm y) := h^*(\bm x \bm y^{-1}).
\end{equation*}
It is clear too that this is invariant under rescaling by non-zero algebraic numbers.

\subsection{Pseudo-Effective Results at Order 3}\label{pseudo-effectivity at order 3}
Given the projective formalism described above, a refined version of Theorem \ref{Order 3 Theorem simplified} may now be stated, for which some additional definitions are required. First, with $\bm \alpha \in \Gm^s$, $s \in \N_{\geq 2}$ fixed, consider functions of the form
\begin{align*}
    f: \N_0 &\longrightarrow \overline \Q \\
    n &\longmapsto c_1 \alpha_1^n + ... + c_s \alpha_s^n.
\end{align*}
For such a function, let $\operatorname{coeff}_{\bm \alpha}(f)$ denote its coefficient vector $(c_1,...,c_s)$, which is assumed to lie in $\Gm^s$. For each $\theta \geq 0$, denote by $\L_s(\bm \alpha, \theta)$ the set of all functions of the above form, where the coefficient vector obeys
\begin{equation*}
    h^*\big(\operatorname{coeff}_{\bm \alpha}(f)\big) \leq \theta.
\end{equation*}
Two functions $f,g \in \L_s(\bm \alpha, \theta)$ are said to be \emph{equivalent} elements of $\L_s(\bm \alpha, \theta)$ if
\begin{equation*}
    \exists \; n \in \Z \quad \operatorname{s.t.} \quad \big[ \operatorname{coeff}_{\bm \alpha}(f) \big] = \big[ \bm \alpha^n \operatorname{coeff}_{\bm \alpha}(g) \big],
\end{equation*}
where $[\bm x]\in \mathbb{P}^\circ \big( \overline \Q^s \big)$ denotes the projective point determined by a given $\bm x \in \Gm^s$. 

It is clear that the zero sets of any two equivalent elements of $\L_s(\bm \alpha, \theta)$ are translates of one another, which justifies the stratification of $\L_s(\bm \alpha, \theta)$ according to the classes determined by this equivalence relation. Given an arbitrary function $f : \N_0 \to \overline \Q$ and an integer $\nu \in \N_0$, let
\begin{equation*}
    \operatorname{mult}_{\geq \nu} (f) := \# \big\{ n \geq \nu : f(n) = 0 \big\} \in \N_0 \cup \{\infty\}
\end{equation*}
denote the zero-multiplicity of $f$ beyond the threshold $\nu$. The following theorem provides small upper bounds for the sum of the zero-multiplicities of inequivalent elements of $\L_3(\bm \alpha, \theta)$, and clearly implies Theorem \ref{Order 3 Theorem simplified}.
\begin{theorem}\label{order 3 pseudo-effectivity theorem}
    Let $\theta \geq 0$ and let $\bm \alpha \in \Gm^3$ be such that $h^*(\bm \alpha) > 0$. Then, given any collection of pairwise inequivalent functions $f_1,...,f_N \in \L_3(\bm \alpha, \theta)$, one has
    \begin{equation*}
        \sum_{j=1}^N \operatorname{mult}_{\geq \nu_1} (f_j) \leq 24 \quad\operatorname{for} \quad \nu_1 = \frac{2.8 + 6\theta}{h^*(\bm \alpha)},
    \end{equation*}
    and
    \begin{equation*}
        \sum_{j=1}^N \operatorname{mult}_{\geq \nu_2} (f_j) \leq 5
        \quad\operatorname{for} \quad \nu_2 = \frac{33.1 + 73.1\theta}{h^*(\bm \alpha)}\cdot
    \end{equation*}
\end{theorem}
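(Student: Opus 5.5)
The plan is to reduce Theorem \ref{order 3 pseudo-effectivity theorem} to Corollary \ref{corollary to main theorem} by packing all the inequivalent functions into a single map $\bm F$, defined on a subset of $\N_0$, whose values are pointwise close to $\bm\beta^n$ for a suitable non-torsion $\bm\beta\in\Gm^2$ with $h(\bm\beta)=h^*(\bm\alpha)$.

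\textbf{Choice of $\bm\beta$ and the first perturbation.} Choose an index $i$ with $h(\varphi_i(\bm\alpha))=h^*(\bm\alpha)$, for instance $i=3$, and set $\bm\beta:=\varphi_3(\bm\alpha)=(\alpha_1/\alpha_3,\alpha_2/\alpha_3)$. Since $h^*(\bm\alpha)>0$, $\bm\beta$ is non-torsion. A zero $n$ of $f=c_1\alpha_1^n+c_2\alpha_2^n+c_3\alpha_3^n$ is the same as
\[
\left(-\tfrac{c_1}{c_3}\beta_1^n,\,-\tfrac{c_2}{c_3}\beta_2^n\right)\in\Pi .
\]
Write $\bm b_f:=(-c_1/c_3,-c_2/c_3)$. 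Then
\[
d(\bm b_f\bm\beta^n,\bm\beta^n)=h(\bm b_f)\le h^*(\operatorname{coeff}_{\bm\alpha}(f))\le\theta,
\]
since $h(-x)=h(x)$. This gives $\delta=0$ in Theorem \ref{main theorem hard} for each $f_j$ separately.

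\textbf{Combining the $N$ functions into one $\bm F$.} Each zero $n\ge\nu$ of each $f_j$ produces a point $\bm b_{f_j}\bm\beta^n\in\Pi$. I would show that the pairs $(j,n)$ give pairwise distinct points of $\Gm^2$. Suppose $\bm b_{f_j}\bm\beta^n=\bm b_{f_k}\bm\beta^m$. Then $\bm b_{f_j}=\bm b_{f_k}\bm\beta^{m-n}$, which projectively reads $[\operatorname{coeff}(f_j)]=[\bm\alpha^{m-n}\operatorname{coeff}(f_k)]$.
\begin{itemize}
\item If $j\neq k$, this contradicts inequivalence.
\item If $j=k$ and $m\neq n$, then $\bm\beta^{m-n}=\bm 1$, contradicting non-torsion.
\end{itemize}
So, with $Z$ denoting the set of all such pairs $(j,n)$, the map $(j,n)\mapsto \bm b_{f_j}\bm\beta^n$ is injective on $Z$, and $\sum_j\operatorname{mult}_{\ge\nu}(f_j)=\#Z$. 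Zeros with distinct indices are distinct elements of $Z$, so any infinite multiplicity is also covered.

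The difficulty is that Theorem \ref{main theorem hard} is stated for functions of a single index $n$, so I need an injective reindexing that does not destroy the height bound. I would argue as follows. Each $n\ge\nu$ carries at most one $j$. Indeed, if two inequivalent $f_j,f_k$ share the zero $n$, both points $\bm b_{f_j}\bm\beta^n$ and $\bm b_{f_k}\bm\beta^n$ lie in $\Pi$, but that is not by itself a contradiction. So instead I would work with the points rather than the indices. Theorem \ref{main theorem hard} only needs a domain $D\subseteq\N_0$ on which $d(\bm F(n),\bm\beta^n)\le\theta$ (the remark after the theorem allows proper subsets). Its proof presumably uses only the existence of many distinct points of $\Pi$ of the form $\bm b\bm\beta^n$ with $h(\bm b)\le\theta$ and $n\ge\nu$.

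\textbf{Main obstacle: the counting step.} The cleanest route is to re-read the counting in Theorem \ref{main theorem hard} as a bound on the number of pairs $(\bm b,n)$ with $h(\bm b)\le\theta$, $n\ge\nu$, $\bm b\bm\beta^n\in\Pi$, and $\bm b\bm\beta^n$ pairwise distinct. If the theorem's proof (auxiliary polynomials evaluated at distinct points of $\Pi$, then extrapolation over the $\lceil X\rceil+1$ largest indices) tolerates repeated $n$ with distinct points, the proof closes. Failing that, I would define $\bm F$ on a subset of $\N_0$ by ordering $Z$ by $n$ and using, for each $n$, a single chosen pair. I would then handle collisions at the same $n$ by a separate argument: two points of $\Pi\cap\bm\beta^n\{\bm b:h(\bm b)\le\theta\}$ at the same large $n$ would force a relation incompatible with the extrapolation estimate.

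Once this counting is secured, applying Corollary \ref{corollary to main theorem} with $h(\bm\beta)=h^*(\bm\alpha)$ gives both bounds: $24$ beyond $\nu_1$ and $5$ beyond $\nu_2$.
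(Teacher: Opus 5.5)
Your setup is correct and matches the paper. That covers the choice $\bm\beta=\varphi_3(\bm\alpha)$ with $h(\bm\beta)=h^*(\bm\alpha)$, the bound $d(\bm b_f\bm\beta^n,\bm\beta^n)\le\theta$, and the injectivity of $(j,n)\mapsto\bm b_{f_j}\bm\beta^n$ from inequivalence and non-torsion.

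There is, however, a genuine gap exactly where you flag it. You need that at most one $f_j$ vanishes at each index $n\ge\nu$. Without this, $\#Z=\sum_j\operatorname{mult}_{\ge\nu}(f_j)$ can exceed the number of distinct zero indices, and Corollary \ref{corollary to main theorem} (a statement about a function of $n$) only controls the latter. You assert this uniqueness, then retract it, and then defer to two fallbacks, neither of which is carried out:
\begin{itemize}
\item A re-reading of the proof of Theorem \ref{main theorem hard} does not help by itself. That counting combines $n_T<(t+\tfrac12)n_1$ with multiplicative gaps $n_{i+1}\ge c\,n_i$, $c>1$, along strictly increasing indices, so two distinct points of $\Pi$ at the same $n$ are precisely what it cannot tolerate.
\item An unspecified ``extrapolation'' argument is not the relevant tool: the Thue--Beukers estimate plays no role in excluding same-index collisions.
\end{itemize}

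The missing idea is an elementary Liouville-type elimination. Relabel so that $h(\beta_1)=h(\bm\beta)$, and write $\bm b(f)=(b_1(f),b_2(f))$. Suppose $f_j(n)=f_k(n)=0$ with $j\neq k$. Eliminating $\beta_2^n$ from the relations $b_1(f_l)\beta_1^n+b_2(f_l)\beta_2^n=1$, $l\in\{j,k\}$, gives
\[
\big(b_1(f_j)b_2(f_k)-b_1(f_k)b_2(f_j)\big)\beta_1^n=b_2(f_k)-b_2(f_j).
\]
Both sides cannot vanish, since that would force $\bm b(f_j)=\bm b(f_k)$, i.e.\ $f_j$ and $f_k$ equivalent. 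Hence $\beta_1^n$ is a quotient of two polynomials of length $2$, each of degree at most $1$ in four coefficients of height at most $\theta$.

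Lemma \ref{heights lemma} then gives $n\,h(\bm\beta)\le\log 2+4\theta$. This is strictly smaller than $\nu h(\bm\beta)\in\{2.8+6\theta,\ 33.1+73.1\theta\}$, a contradiction. With this uniqueness, your injective map becomes a genuine function of $n$ on the set of zero indices, and the Corollary finishes the proof exactly as you intended.

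This computation is just the $r=0$ case of the elimination in Lemma \ref{lower bound on gaps}. So your idea of re-reading the main proof could have been rescued, but only by making this step explicit.
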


\begin{proof}
Let $\nu\in \{\nu_1, \nu_2\}$ and let
\begin{equation*}
    n_T > n_{T-1} > ... > n_1 \geq \nu
\end{equation*}
be integers such that for all $1 \leq i \leq T$ at least one of $f_1,...,f_N$ vanishes at $n_i$. It is claimed that for all $1 \leq i \leq T$ there exists a unique index $1 \leq j \leq N$ with $f_j(n_i) = 0$. Suppose this were not the case. Then there exists $1 \leq j < k \leq N$ with $f_j(n_i) = f_k(n_i) = 0$. Without loss of generality assume that $h(\alpha_1/\alpha_3) = h^*(\bm \alpha)$, and define
\begin{equation*}
\bm \beta := \varphi_3(\bm \alpha), \quad \bm b(f_l) := \varphi_3(-\operatorname{coeff}_{\bm \alpha}(f_l)) \quad ; \quad l \in \{j,k\}.
\end{equation*}
Then,
\begin{align*}
    b_1(f_j) \beta_1^{n_i} &+b_2(f_j) \beta_2^{n_i} = 1, \\
    b_1(f_k) \beta_1^{n_i} &+b_2(f_k) \beta_2^{n_i} = 1,
\end{align*}
which implies
\begin{equation*}
    \big( b_1(f_j) b_2(f_k) - b_1(f_k) b_2(f_j) \big)\beta_1^{n_i} = b_2(f_k) - b_2(f_j).
\end{equation*}
The simultaneous vanishing of both sides of this equation may be immediately ruled out, since it would imply that $\bm b(f_j) = \bm b(f_k)$, contradicting the inequivalence of $f_j$ and $f_k$. By using the standard height inequality recorded in Lemma \ref{heights lemma} of \S \ref{preliminary lemmas}, it follows that
\begin{align*}
    \nu h(\bm \beta) \leq n_i h( \bm \beta) &= h\left(\frac{b_2(f_k) - b_2(f_j)}{b_1(f_j) b_2(f_k) - b_1(f_k) b_2(f_j)} \right) \\
    &\leq \log 2 + 4\theta < \nu h(\bm \beta),
\end{align*}
which is a contradiction. This proves the claim that for all $1 \leq i \leq T$ there exists a \emph{unique} $1 \leq j(i) \leq N$ such that $f_{j(i)}$ vanishes at $n_i$. This in turn implies that
\begin{equation}\label{some relation}
    T = \sum_{j=1}^N \operatorname{mult}_{\geq \nu} (f_j).
\end{equation}
Now define the function
\begin{align*}
    \bm F : \{n_1,...,n_T\} &\longrightarrow \Gm^2 \\
    n_i &\longmapsto \bm b(f_{j(i)}) \bm \beta^{n_i}.
\end{align*}
It is further claimed that this function is injective. Otherwise there would exist indices $1 \leq i < i' \leq T$ such that $\bm F(n_i) = \bm F(n_{i'})$, implying
\begin{equation*}
    \bm b(f_{j(i)}) = \bm b(f_{j(i')}) \bm \beta^{n_{i'} - n_i}.
\end{equation*}
Since $n_i \neq n_{i'}$, and $\bm \beta$ is non-torsion, it cannot hold that $j(i) = j(i')$. Alternatively, if $j(i) \neq j(i')$ the above contradicts the inequivalence of $f_{j(i)}$ and $f_{j(i')}$. Hence, the injectivity of $\bm F$ is established, and in conjunction with Corollary \ref{corollary to main theorem} one has
\begin{equation*}
    T = \#\big( \bm F(\N_{\geq \nu}) \cap \Pi \big) \leq 24 \quad \operatorname{when}\quad \nu = \nu_1
\end{equation*}
and
\begin{equation*}
    T = \#\big( \bm F(\N_{\geq \nu}) \cap \Pi \big) \leq 5  \quad \operatorname{when}\quad \nu = \nu_2.
\end{equation*}
Finally, in view of the relation (\ref{some relation}) the proof is complete.
\end{proof}

\subsection{Pseudo-Effective Results at Higher Order Subject to Low Height Root Clustering}\label{pseudo-effectivity at higher orders}

Whereas in \S \ref{pseudo-effectivity at order 3} it was sufficient to apply Corollary \ref{corollary to main theorem} to conclude the proof, one may apply the fully general Theorem \ref{main theorem hard} to extend the pseudo-effective framework to higher order linear recurrences in the case where several characteristic roots lie close together with respect to the distance function $d$ on $\Gm \cong \overline \Q^*$. That is: to study solutions to equations of the form
\begin{equation*}
    v_1 \lambda_1^n + \dots + v_k \lambda_k^n = 0  \quad \quad\operatorname{given}\quad\quad v_1,\dots,v_k;\lambda_1,..., \lambda_k\in \overline \Q^*,
\end{equation*}
for which there exists a $\lambda \in \overline \Q^*$ and a sufficiently small $\varepsilon > 0$ such that
\begin{equation*}
    \forall\; 3 \leq i \leq k : \quad d(\lambda_i,\lambda) \leq \varepsilon.
\end{equation*}
A natural way to encode this setup in the language of the previous subsection is to first fix an $\bm \alpha \in \Gm^2$, with $h^*(\bm \alpha) > 0$, and a $\bm \xi \in \Gm^s$ such that $h^*(\bm \xi) \leq \varepsilon$, and then consider the equation
\begin{equation*}
    f(n) = g(n), \quad f \in \L_2(\bm \alpha, \theta),\; g \in \L_s(\bm \xi, \theta),
\end{equation*}
with $\theta \geq 0$ given. This yields the following generalisation of the pseudo-effective framework.
\begin{theorem}\label{higher order pseudo-effectivity theorem}
    Let $\theta \geq 0$, $s \in \N$, and let $\bm \alpha \in \Gm^2$, $\xi \in \Gm^s$ be such that $h^*(\bm \alpha) > 0$ and $h^*(\bm \xi) \leq \delta h^*(\bm \alpha)$ for some $\delta \in [0,1/9)$. Then, given any collection of pairwise inequivalent functions $f_1,...,f_N \in \L_2(\bm \alpha, \theta)$ and arbitrary functions $g_1,..., g_N \in \L_s(\bm \xi, \theta)$, if
    \begin{equation*}
         X > \frac{\log \left( \frac{6-3\delta}{1-9\delta} + \frac{1}{2} \right)}{\log \left(\frac{3-2\delta}{2(1+\delta)}\right)},
    \end{equation*}
    then
    \begin{equation*}
        \sum_{j=1}^N \operatorname{mult}_{\geq  \nu} (f_j - g_j) \leq \lceil X \rceil
    \end{equation*}
    for
    \begin{equation*}
        \nu = \frac{1}{h^*(\bm \alpha)} \cdot \min_{t \in I_{\delta, X}} \max \big\{ f_{\theta, \delta}(t) ; g_{\theta, \delta, X}(t) \big\}.
    \end{equation*}
\end{theorem}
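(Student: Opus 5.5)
The plan is to reduce to Theorem \ref{main theorem hard}, in the remark's form allowing $\bm F$ to be defined on a subset of $\N_0$, copying the structure of the proof of Theorem \ref{order 3 pseudo-effectivity theorem}. Write $f_j(n) = c_{j,1}\alpha_1^n + c_{j,2}\alpha_2^n$ and $g_j(n) = \sum_{i=1}^s d_{j,i}\xi_i^n$. A zero $n \geq \nu$ of $f_j - g_j$ is recast as the affine equation
\begin{equation*}
    \Big(-\frac{c_{j,1}}{c_{j,2}}\Big)\Big(\frac{\alpha_1}{\alpha_2}\Big)^n + \frac{g_j(n)}{c_{j,2}\alpha_2^n} = 1,
\end{equation*}
that is, $\bm F(n) \in \Pi$ with $\bm F(n) := \big(-\tfrac{c_{j,1}}{c_{j,2}}(\alpha_1/\alpha_2)^n,\; g_j(n)/(c_{j,2}\alpha_2^n)\big)$.

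\textbf{Degenerate zeros.} First I dispose of the degenerate case $g_j(n) = 0$. Then $f_j(n)=0$ forces $(\alpha_1/\alpha_2)^n = -c_{j,2}/c_{j,1}$, so $n\,h^*(\bm\alpha) \leq \theta$. This is below $\nu$, since $\nu h^*(\bm \alpha) \geq \min_t f_{\theta,\delta}(t) > \theta$, as is clear from the shape of $f_{\theta,\delta}$ with its numerator containing $(5t-1)\theta$ and $t>6$.

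\textbf{Choice of $\bm\beta$ and the perturbation bound.} I take $\bm\beta := (\alpha_1/\alpha_2,\ \xi_1/\alpha_2) \in \Gm^2$, which is non-torsion because $h(\bm\beta) \geq h(\alpha_1/\alpha_2) = h^*(\bm\alpha) > 0$. Then
\begin{equation*}
    d(\bm F(n),\bm\beta^n) = \max\Big\{ h(c_{j,1}/c_{j,2}),\ h\Big(\frac{g_j(n)}{c_{j,2}\,\xi_1^n}\Big) \Big\}.
\end{equation*}
The first term is at most $\theta$. The second is controlled via
\begin{equation*}
    \frac{g_j(n)}{\xi_1^n} = \sum_i d_{j,i}(\xi_i/\xi_1)^n,
\end{equation*}
using the standard height inequalities for sums and products from Lemma \ref{heights lemma}. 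This gives a bound of the form (constant) $+\; n\,h^*(\bm\xi) \leq \theta + n\delta h^*(\bm\alpha) \leq \theta + n\delta h(\bm\beta)$. Replacing $h^*(\bm\alpha)$ by the larger $h(\bm\beta)$ only lowers the threshold furnished by Theorem \ref{main theorem hard}, so the stated $\nu$ remains valid.

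\textbf{Counting across the family.} As in the order $3$ proof, I list all zero indices $n_1<\dots<n_T$ (all $\geq \nu$) of the $f_j-g_j$. I first show each $n_i$ belongs to a unique $j$: two relations at the same $n_i$ can be eliminated against each other, and the height inequality $\log 2 + 4\theta < \nu h^*(\bm\alpha)$ (already built into $g_{\theta,\delta,X}$) rules out a coincidence. I then show the resulting $\bm F$ is injective, using non-torsion of $\alpha_1/\alpha_2$ and inequivalence of the $f_j$ exactly as before. Theorem \ref{main theorem hard} then gives $T \leq \lceil X\rceil$.

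\textbf{Main obstacle.} The main obstacle is the bookkeeping in the perturbation bound. A naive estimate of $h(g_j(n)/(c_{j,2}\xi_1^n))$ produces an additive $\log s$ and a cross term $h(d_{j,1}/c_{j,2})$ that is not obviously bounded by $\theta$. So I would first try to absorb these by working projectively with $h^*$ and $d^*$ on the full coefficient vector, choosing the normalising coordinate as in \S\ref{projective notations}. Failing that, I would read the hypothesis as bounding the combined coefficient vector, and adjust the constant entering $f_{\theta,\delta}$ accordingly.

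A second delicate point is the uniqueness claim. The $g_j$ differ between the two equations at a common $n_i$, so the elimination no longer cancels the third term. I would instead define $\bm F$ on pairs $(j,n)$ and apply Theorem \ref{main theorem hard} separately, or reorganise the argument so that distinct zeros give distinct points of $\Pi$, which is all the counting requires.
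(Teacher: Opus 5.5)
\textbf{There is a genuine gap, in the perturbation estimate.} Your reduction is the one the paper has in mind. Its omitted proof builds $\bm F$ on a subset of $\N_0$, invokes Theorem \ref{main theorem hard}, and gets injectivity from inequivalence. Your handling of non-torsion of $\bm\beta$, of degenerate zeros and of injectivity is fine.

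\textbf{The linear rate is not $h^*(\bm\xi)$.} Lemma \ref{heights lemma} applied to $\sum_i (d_{j,i}/c_{j,2})(\xi_i/\xi_1)^n$ gives
$\log s+\sum_i h(d_{j,i}/c_{j,2})+n\sum_{i\geq 2}h(\xi_i/\xi_1)$.
The contributions of the $s-1$ ratios add, so the rate can be as large as $(s-1)h^*(\bm\xi)$. This is not just a weakness of the estimate: the growth is governed by the projective Weil height of $[\xi_1:\cdots:\xi_s]$, which can strictly exceed $h^*(\bm\xi)$.

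Concretely, take $\bm\alpha=(3^{10},1)$, $\bm\xi=(1,1/2,1/3)$, $f(n)=3^{10n}+1$ and $g(n)=1+2^{-n}+3^{-n}$. Then $\theta=0$, $h^*(\bm\alpha)=10\log 3$, $h^*(\bm\xi)=\log 3$, and $\delta=1/10$ is admissible. Your $\bm\beta$ is $(3^{10},1)$, and $d(\bm F(n),\bm\beta^n)=h(g(n))=\log(6^n+3^n+2^n)>n\log 6$, because the numerator is coprime to $6$. Since $h(\bm\beta)=10\log 3$ and $\log 6>\frac{10}{9}\log 3$, the hypothesis of Theorem \ref{main theorem hard} fails for every admissible $\delta$.

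The same happens for your $\bm F$ in the paper's order-$10$ example with all coefficients equal to $1$. For $g(n)=\sum_i\xi_i^n$ one has $|g(n)|_v=2^n$ at every place $v$ above $2$, by the $2$-adic Newton polygon of $2B$. Since $\xi_1$ is a unit away from $2$ and has modulus $1$ at every archimedean place, this gives $h(g(n)\xi_1^{-n})\geq n\log 2$. That is a rate of $\log 2/\log 10>1/9$ relative to $h(\bm\beta)=\log 10$.

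\textbf{The additive constant is not $\theta$ either, and the statement as written is false.} The cross terms $h(d_{j,i}/c_{j,2})$ are not controlled by the hypotheses at all. Since $h^*$ is projective, $\lambda g\in\L_s(\bm\xi,\theta)$ for every $\lambda\in\overline\Q^*$ and any nonzero $g\in\L_s(\bm\xi,\theta)$. Choosing $\lambda_j=f_j(n_j)/g(n_j)$ forces $f_j-\lambda_j g$ to vanish at any prescribed $n_j\geq\nu$ with $f_j(n_j)g(n_j)\neq 0$. Moreover $\L_2(\bm\alpha,0)$ contains infinitely many pairwise inequivalent elements: take $c_{j,1}/c_{j,2}$ to be distinct roots of unity. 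So the sum of multiplicities is unbounded, and your joint coefficient bound is forced, not a fallback.

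\textbf{What your argument proves once repaired.} With a joint bound on the full coefficient vector, it proves the statement with $\theta$ replaced by $\theta'=\log s+s\theta$. It also needs the hypothesis $h([\xi_1:\cdots:\xi_s])\leq\delta h^*(\bm\alpha)$, which follows from $(s-1)h^*(\bm\xi)\leq\delta h^*(\bm\alpha)$.

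\textbf{Your second worry is harmless.} Suppose $f_j-g_j$ and $f_k-g_k$, with $j\neq k$, both vanish at the same $n$. This gives two points $\bm F_j(n),\bm F_k(n)\in\Pi$ with the same $\bm\beta^n$, and they are distinct because their first coordinates differ by inequivalence. The elimination in Lemma \ref{lower bound on gaps} with $r=0$ then gives $n(1-4\delta)h(\bm\beta)\leq\log 2+4\theta$. The choice of $\nu$ excludes this, since $g_{\theta,\delta,X}(t)>(\log 2+4\theta)/(1-4\delta)$. The $g_j$ sit inside the second coordinate, so nothing has to cancel. By contrast, applying Theorem \ref{main theorem hard} separately for each $j$ would only give $N\lceil X\rceil$.
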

The proof of this theorem proceeds nearly identically to that of Theorem \ref{order 3 pseudo-effectivity theorem}. One constructs a function $\bm F : S \to \Gm^2$, for some $S \subseteq \N_0$, to which Theorem \ref{main theorem hard} may be applied in place of Corollary \ref{corollary to main theorem}; its injectivity follows from the inequivalence of $f_1,\ldots,f_N$. For this reason, the proof of Theorem \ref{higher order pseudo-effectivity theorem} is omitted here.

To conclude this section, Theorem \ref{higher order pseudo-effectivity theorem} is applied to the example (\ref{high order example}) to derive the quantitative bounds given in the Introduction. The characteristic polynomial of the recurrence relation (\ref{high order example}) is given by
\begin{equation*}
    f(X) = A(X) B(X),
\end{equation*}
where
\begin{equation*}
    A(X) := X^2 + \frac{19}{10}X + 1, \quad B(X) := X^4 C(X + X^{-1}),
\end{equation*}
and
\begin{equation*}
     C(Y) := Y^4- \frac{1}{2} Y^3 - 2Y^2 + \frac{1}{2} Y + \frac{1}{2}\cdot
\end{equation*}
Since $C(Y)$ has all of its roots in the interval $(-2,2)$, it is easy to show that the roots of $B(X)$, as well as $A(X)$, all have modulus $1$, hence all ten characteristic roots of (\ref{high order example}) lie on the unit circle. The polynomial \(B(X)\) is irreducible over \(\mathbb Q\), whence by expanding \(B(X)\) and computing its Mahler measure, each of its roots has height \(\frac18\log2\). This gives an upper bound on the projective invariant height of $\bm \xi = (\xi_1,...,\xi_8)$:
\begin{equation*}
    h^*(\bm \xi) \leq  \max_{1 \leq i < j \leq 8} \big(h(\xi_i) + h(\xi_j) \big) \leq \frac{\log 2}{4}\cdot
\end{equation*}
Write $\alpha_1, \alpha_2$ for the roots of $A(X)$, and set $\bm \alpha := (\alpha_1, \alpha_2$), giving
\begin{equation*}
    h^*(\bm \alpha) = \log 10.
\end{equation*}
It is clear that the exponential polynomial representation of a sequence satisfying (\ref{high order example}) and (\ref{coefficient height order 10}) may be written
\begin{equation}
    a_n = f(n) - g(n) \quad \text{where} \quad f \in \mathcal{L}_2(\bm \alpha, \theta) \quad \text{and} \quad g\in \mathcal{L}_8(\bm \xi, \theta).
\end{equation}
Two such sequences $$\big(a_n = f(n) - g(n)\big)_{n \geq 0},\quad \big(a_n' = f'(n) - g'(n)\big)_{n \geq 0}$$ are regarded as equivalent if $f, f'$ are equivalent elements of $\mathcal{L}_2(\bm \alpha, \theta)$. Setting
\begin{equation*}
    \delta:= \frac{\log 2}{4 \log 10} \geq \frac{h^*(\bm \xi)}{h^*(\bm \alpha)} ,
\end{equation*}
one has
\begin{equation*}
    \delta = 0.07526...< 1/9,
\end{equation*}
and Theorem \ref{higher order pseudo-effectivity theorem} applies. Substituting the above $\delta$ and \(h^*(\alpha)=\log10\) into Theorem \ref{higher order pseudo-effectivity theorem}, and numerically solving \(f_{\theta,\delta}(t)=g_{\theta,\delta,X}(t)\) for $\theta = 10$ and $\theta = 1$, gives Tables \ref{tab:X-nu} and \ref{tab2:X-nu}.

\section{Linear Orbits Intersecting Expanding Arithmetic Neighbourhoods of Subspaces}\label{linear orbits}
In this brief section Theorem \ref{main theorem hard} is applied to a moving target problem in linear dynamics, generalising the following classical question: Given $\bm x \in (\overline{\Q}^*)^3$ and a diagonalisable matrix $M \in \operatorname{Mat}_{3\times 3}(\overline{\Q})$, determine the set of all $n \in \N_0$ for which
$$
M^n \bm x \in V,
$$
where $V$ is a fixed proper subspace of $(\overline{\Q}^*)^3$. This problem is a well-known reformulation of the Skolem Problem (for order three LRS's) in the language of linear dynamics. In particular, for matrices of dimension three, the set of such integers $n$ is effectively computable.

In this section one studies a quantitative generalisation of this question. Rather than considering exact intersections of the linear orbit with $V$, one instead considers those indices for which the orbit comes arithmetically close to $V$, with respect to the notions of arithmetic distance introduced in the previous sections. More precisely, one studies those integers $n$ for which the point $M^n\bm x$ lies within a prescribed arithmetic neighbourhood of $V$, where the size of this neighbourhood is allowed to vary with $n$. To formulate this precisely, some additional notation is first introduced, followed by the statement of Theorem \ref{theorem for linear orbits}.

In the following theorem, let $\operatorname{Diag}^*_3$ denote the set of all $3 \times 3$ diagonal matrices whose diagonal elements are non-zero algebraic numbers. Further, given $\bm \lambda \in \Gm^3$ write $\operatorname{diag}(\bm \lambda) \in \operatorname{Diag}_3^*$ for the matrix with diagonal $(\lambda_1, \lambda_2,\lambda_3)$. With this notation, the projective invariant height function $h^*$ may be applied to $\operatorname{Diag_3^*}$ via the formula
\begin{equation*}
    h^*\big(\operatorname{diag}(\bm \lambda)\big) := h^*(\bm \lambda).
\end{equation*}
Finally, given $\bm v \in \Gm^3$ write $\bm v^{\perp}$ for the set
\begin{equation*}
    \bm v^{\bm \perp} := \left\{\bm x \in \Gm^3 : v_1 x_1 + v_2 x_2 + v_3 x_3 = 0 \right\}.
\end{equation*}
\begin{theorem}\label{theorem for linear orbits}
Let $\bm v \in \Gm^3$ and let $D \in \operatorname{Diag}_3^*$ be such that $h^*(D) > 0$. Then, for all $\delta \in (0,1/9)$ and all $\bm x \in \Gm^3$ the inequality
\begin{equation}\label{dynamical inequality}
    d^*\big(D^n \bm x, \bm v^{\perp}\big) \leq n \delta h^*(D)
\end{equation}
is satisfied only finitely often in $n \in \N_0$.
\end{theorem}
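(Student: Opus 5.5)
The plan is to reduce Theorem \ref{theorem for linear orbits} to Theorem \ref{main theorem soft} (or its quantitative form, Theorem \ref{main theorem hard}) by converting each index $n$ satisfying (\ref{dynamical inequality}) into a point on $\Pi$ that lies near an exponential orbit. Here $d^*(\bm y, \bm v^\perp)$ is read as $\inf_{\bm z \in \bm v^\perp} d^*(\bm y,\bm z)$. Because the distances take values in a discrete-enough set, or by relaxing $\delta$ slightly to some $\delta' \in (\delta, 1/9)$, one may assume for each such $n$ a witness $\bm z_n \in \bm v^\perp$ with $d^*(D^n\bm x, \bm z_n) \le n\delta' h^*(D) + 1$. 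Relaxing the bound this way costs only an additive constant, which plays the role of $\theta$.

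Write $D = \operatorname{diag}(\bm\lambda)$ and relabel coordinates so that $h(\lambda_1/\lambda_3) = h^*(\bm\lambda)$. The condition $\bm z_n \in \bm v^\perp$ reads $v_1 z_{n,1} + v_2 z_{n,2} + v_3 z_{n,3} = 0$. Dividing by $-v_3 z_{n,3}$ gives
\begin{equation*}
\bm F(n) := \varphi_3\big(-\bm v \bm z_n\big) \in \Pi .
\end{equation*}
Set $\bm\beta := \varphi_3(\bm\lambda)$ and $\bm b := \varphi_3(-\bm v\bm x)$. Then $\bm b\bm\beta^n = \varphi_3(-\bm v D^n \bm x)$, and $\varphi_3$ is a homomorphism. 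Hence
\begin{equation*}
d(\bm F(n), \bm b\bm\beta^n) = h\big(\varphi_3(\bm z_n (D^n\bm x)^{-1})\big) \le d^*(\bm z_n, D^n \bm x).
\end{equation*}
Since $h(\bm\beta) \ge h(\lambda_1/\lambda_3) = h^*(\bm \lambda)$, and also $h(\bm\beta) \le h^*(\bm\lambda)$ by definition, we get $h(\bm\beta) = h^*(D) > 0$, so $\bm\beta$ is non-torsion. Absorbing the fixed $\bm b$ into the perturbation gives
\begin{equation*}
d(\bm F(n), \bm\beta^n) \le h(\bm b) + 1 + n\delta' h(\bm\beta),
\end{equation*}
and therefore $\limsup_n \frac1n d(\bm F(n),\bm\beta^n) \le \delta' h(\bm\beta) < \frac19 h(\bm\beta)$. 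Theorem \ref{main theorem hard}, applied with $\theta = h(\bm b)+1$ on the domain $S$ of admissible $n$, then shows that $\bm F(S) \cap \Pi = \bm F(S)$ is finite.

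It remains to pass from finiteness of the image to finiteness of $S$, and this is the main obstacle. Theorem \ref{main theorem hard} bounds $\#(\operatorname{im}\bm F \cap \Pi)$, not the number of $n$, and different $n$ could share the same point on $\Pi$. The witnesses $\bm z_n$ are free to choose, so I would choose them canonically to force injectivity. Suppose $\bm F(n) = \bm F(m)$ with $n \ne m$. Then $\bm z_n, \bm z_m$ agree projectively, while $D^n\bm x$ and $D^m\bm x$ differ projectively by $\bm\beta^{n-m}$. Writing $\bm P$ for the common point, the triangle inequality gives
\begin{equation*}
|n-m|\, h(\bm\beta) = d(\bm\beta^n, \bm\beta^m) \le d(\bm b\bm\beta^n, \bm P) + d(\bm P, \bm b\bm\beta^m) \le 2h(\bm b) + 2 + (n+m)\delta' h(\bm\beta).
\end{equation*}
For $m \ge n$ this says $m(1-\delta') \le n(1+\delta') + C$, so each fibre of $\bm F$ is contained in a window $[n, \kappa n + C]$ with $\kappa = \frac{1+\delta'}{1-\delta'} < 2$. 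This alone does not make fibres finite. I would instead obtain injectivity by enlarging the setup. Apply the argument of Theorem \ref{main theorem hard} to the function $n \mapsto \bm F(n)$ restricted to a set on which it is injective, such as a subsequence selecting one $n$ per fibre, and bound each fibre separately. Fix a point $\bm P \in \Pi$ with infinite fibre $\{n_k\}$. Then $d(\bm P, \bm b\bm\beta^{n_k}) \ge n_k h(\bm\beta) - h(\bm P) - h(\bm b)$ grows like $n_k h(\bm\beta)$, whereas the hypothesis caps it at $n_k\delta' h(\bm\beta) + C$. Since $\delta' < 1$, this is a contradiction for large $n_k$. So every fibre is finite, and finitely many fibres then give finitely many $n$, completing the proof.
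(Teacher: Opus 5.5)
Your proposal is correct. It reaches the conclusion by essentially the paper's reduction, with a different but valid finish to the fibre step.

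\textbf{The reduction.} This part coincides with the paper. You pass to $\Gm^2$ via $\varphi_3$, ordering coordinates so that $h(\bm\beta)=h^*(D)$. You absorb the fixed vector $\bm b$ into $\theta$, then apply Theorem~\ref{main theorem hard} on the domain $S$ to get $\bm F(S)$ finite. Three small points:
\begin{itemize}
\item Your additive slack in choosing the witness $\bm z_n$ is more careful than the paper, which tacitly assumes a witness achieving the bound exists. The $+1$ slack alone suffices, so $\delta'$ is unnecessary.
\item The ``discrete-enough'' justification should be dropped: height values are not discrete, e.g.\ $h(2^{1/k})=(\log 2)/k\to 0$.
\item As in the paper, $\varphi_3(-\bm v\bm z_n)$ has coordinate sum $-1$, not $1$, because the minus signs cancel in the ratios. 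Take $\bm F(n):=(-1,-1)\,\varphi_3(\bm v\bm z_n)$ instead; this changes no heights.
\end{itemize}

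\textbf{The fibre step.} Your claim that the window bound ``alone does not make fibres finite'' is mistaken. Take $n$ to be the \emph{least} element of the fibre of $\bm P$. Then every $m$ in that fibre satisfies $m\le\kappa n+C$ with $n$ fixed, so the fibre is finite. This is exactly how the paper finishes: from $\bm\xi(n)=\bm\xi(m)\bm\beta^{m-n}$ it derives $m\le\frac{1+\delta}{1-\delta}\,n$.

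Your replacement argument is also correct. The reverse triangle inequality gives $n(1-\delta')h(\bm\beta)\le 1+h(\bm P)+h(\bm b)$ for every $n$ in the fibre of $\bm P$. So you anchor the bound at the height of the target point rather than at the smallest index.

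What each buys:
\begin{itemize}
\item The paper's anchoring gives the structural picture recorded after its proof: solutions lie in finitely many intervals $[n,(1+\varepsilon)n]$.
\item Yours bounds every index hitting $\bm P$ linearly in $h(\bm P)$.
\end{itemize}
Neither is effective, since neither the finite set $\bm F(S)$ nor the least elements of the fibres are controlled. The intermediate detour about restricting $\bm F$ to a set on which it is injective is not needed and can be deleted.
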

\begin{proof}
Let $D = \operatorname{diag}(\bm \lambda)$ for some $\lambda \in \Gm^3$ and without loss of generality assume that $h(\lambda_1/\lambda_3) = h^*(\bm \lambda) > 0$. Set also
\begin{equation*}
    S = \left\{ n \in \N_0 :  d^*\big(D^n \bm x, \bm v^{\perp}\big) \leq n \delta h^*(D) \right\}.
\end{equation*}
To reformulate in $\Gm^2$, set
\begin{equation*}
    \bm \beta := \varphi_3(\bm \lambda), \quad \bm c := \varphi_3(-\bm x), \quad \bm u := \varphi_3(-\bm v).
\end{equation*}
Then $h(\bm \beta) = h(\beta_1)$, and for all $n \in S$ there exists a $\bm w(n) \in \Gm^2$ satisfying
\begin{equation*}
    d \big(\bm \beta^n \bm c, \bm w(n) \big) \leq n \delta h(\bm \beta) \quad \text{and} \quad u_1 w_1(n) + u_2 w_2(n) = 1.
\end{equation*}
By defining for each $n \in S$:
\begin{equation*}
    \bm \xi(n) := \bm w (n) \bm \beta^{-n} \bm c^{-1},
\end{equation*}
one has
\begin{equation*}
    h\big( \bm \xi(n)\big) \leq n \delta h(\bm \beta),
\end{equation*}
and
\begin{equation*}
    u_1 c_1 \xi_1(n) \beta_1^n + u_2 c_2 \xi_2(n) \beta_2^n = 1.
\end{equation*}
Therefore, for the function
\begin{align*}
    \bm F : S &\longrightarrow \Gm^2 \\
    n &\longmapsto \bm u \bm c \bm \xi(n) \bm \beta^n
\end{align*}
there exists a $\theta \geq 0$ such that
\begin{equation*}
    \forall n \in S: \quad d \big( \bm F(n), \bm \beta^n \big) \leq \theta + n \delta h(\bm \beta).
\end{equation*}
Noting that the image of $\bm F$ is contained in $\Pi$, Theorem \ref{main theorem hard} implies that $\operatorname{im}(\bm F)$ is a finite set. The proof is complete once it is shown that the preimage of every $\bm a \in \operatorname{im}(\bm F)$ is finite.

Given $\bm a \in \operatorname{im} (\bm F)$, let $n \in S$ be the least element of $\bm F^{-1}(\bm a)$, and let $m > n$ be any other element of $\bm F^{-1}(\bm a)$. By setting $r:= m-n > 0$, one obtains
\begin{equation*}
    \bm F(n) = \bm a = \bm F(m) \quad \implies \quad \bm \xi(n) = \bm \xi (m) \bm \beta^r.
\end{equation*}
Recalling the assumption that $h(\beta_1) = h(\bm \beta)$, it follows that
\begin{align*}
    r h(\bm \beta) &\leq h(\xi_1(n)) + h(\xi_1(m)) \\
    & \leq \delta h(\bm \beta) (2n + r).
\end{align*}
Therefore,
\begin{equation*}
    m \leq \left(1 + \frac{2\delta
    }{1-\delta}\right) n < \frac{5}{4}\cdot n,
\end{equation*}
where the inequality $\delta < 1/9$ is applied at the last step. Hence  $\bm F^{-1}(\bm a)$ is contained in a finite interval, which completes the proof.
\end{proof}
This theorem is not only non-effective---it does not offer any method for determining the finite set of $n \in \N_0$ solving (\ref{dynamical inequality})---but also non-quantitative. Indeed, as is implicit in the proof, the set of solutions lies in a union of intervals of the form $[n, (1 + \e) n]\cap \N_0$ with $n$ ranging over a finite set, and $\e = 2\delta/(1-\delta)$. Without bounds on how large this index $n$ may be, it is not possible to bound the cardinality of the set of solutions to (\ref{dynamical inequality}).
\section{Proof of Theorem \ref{main theorem hard}}\label{proof of the main theorem}
In this section Theorem \ref{main theorem hard} is proved. The main technical tool used to this end is Lemma \ref{Beukers Tijdemann lemma}, which reformulates \cite[Lemma 2.3]{BeukersEvertse1996} and \cite[Lemma 6]{beukersTijdemann1984}. This gives an upper bound for the gaps between the solutions to the equation (\ref{generalised equation}), recorded in Lemma \ref{upper bound on gaps} of \S \ref{section establishing upper bound}. Then in \S \ref{section establishing lower bound and completing proof} lower bounds on these gaps are obtained from elementary properties of the logarithmic height function, quickly leading to the completion of the proof. 

In preparation for the proof, some notation and data are fixed. First, one denotes by
\begin{equation*}
    \operatorname{round}(x)
\end{equation*}
the integer nearest to $x \in \R_{\geq 0}$, with the convention that half integers are rounded up: $\operatorname{round}(n +\frac{1}{2}) := n+1$ for all $n \in \N_0$. As a consequence $x < \operatorname{round}(x) + \frac{1}{2}$ for all $x \in \R_{\geq 0}$. Also, the standard notation
\begin{equation}
    \llbracket a, b \rrbracket := [a,b]\cap \Z, \quad \llparenthesis a, b \rrparenthesis := (a,b) \cap \Z
\end{equation}
and any combination of these barred brackets are used. Throughout this section fix the following data. One has
\begin{equation*}
    \delta \in \left( 0, \frac{1}{9} \right), \quad \theta \geq 0,
\end{equation*}
and a non-torsion $\bm \beta \in \Gm^2$. Further, one fixes a function
\begin{align*}
     \bm F : \N_0 &\longrightarrow \Gm^2 \\
     n &\longmapsto \big( F_1(n), F_2(n) \big)
\end{align*}
such that the associated function
\begin{align*}
     \psi : \N_0 &\longrightarrow \R_{\geq 0} \\
     n &\longmapsto h \big(\bm F(n) \beta^{-n}\big)
 \end{align*}
satisfies
\begin{equation*}
     \forall \; n \in \N_0: \quad\psi(n) \leq \theta + n \delta h(\bm \beta).
\end{equation*}
\subsection{Preliminary Lemmas}\label{preliminary lemmas}
As remarked above, the following lemma provides the main input for Lemma \ref{upper bound on gaps}.
\begin{lemma}[\cite{BeukersEvertse1996,beukersTijdemann1984}]\label{Beukers Tijdemann lemma}
    Let $u,v,\gamma \in \overline \Q^*$ and let $s \in \N$ be such that
    \begin{equation*}
        u \gamma^{2s}+v (1-\gamma)^{2s} = 1.
    \end{equation*}
    Then 
    \begin{equation*}
    h(\gamma) \leq \frac{1}{s} \log 2 + \log(6\sqrt{3}) + \frac{1}{s} \big( h(u) + h(v) \big).
    \end{equation*}
\end{lemma}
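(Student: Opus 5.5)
The plan is to follow the Thue–Siegel auxiliary polynomial method of Beukers and Tijdeman, using explicit Padé approximants to the pair of functions $x^{2s}$ and $(1-x)^{2s}$. The cases $\gamma = 1$, or more generally $h(\gamma)=0$, are trivial, so assume $\gamma \notin \{0,1\}$.

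\textbf{Construction.} The first step is to construct, by hypergeometric or Padé methods, two triples of polynomials $(A_i, B_i, C_i) \in \Z[x]^3$ for $i=1,2$ with the following properties.
\begin{itemize}
\item[(i)] The identity $x^{2s}A_i(x) + (1-x)^{2s}B_i(x) = C_i(x)$ holds for each $i$.
\item[(ii)] All degrees are at most $s$, and the sum of the absolute values of the coefficients is $O\big((6\sqrt 3)^{s}\big)$ up to a factor absorbed by the $\frac1s\log 2$ term.
\item[(iii)] The determinant $A_1B_2 - A_2B_1$ (equivalently the $2\times 2$ minors of the $2\times 3$ matrix of the triples) equals a nonzero constant times $x^{a}(1-x)^{b}$.
\end{itemize}
Property (iii) is the usual Wronskian argument: the minor has a zero of high order at $0$ and at $1$ but low degree. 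Consequently the vectors $(A_i(\gamma), B_i(\gamma), -C_i(\gamma))$ are linearly independent for $\gamma \notin\{0,1\}$.

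\textbf{Elimination.} Set $\bm w := (\gamma^{2s}, (1-\gamma)^{2s}, 1)$. The hypothesis says $(u, v, -1)\perp \bm w$, and (i) says both Padé vectors are $\perp \bm w$. The orthogonal complement of $\bm w$ is $2$-dimensional and the two Padé vectors span it. Hence at least one of them, say $(A,B,-C)(\gamma)$, is not proportional to $(u,v,-1)$. The cross product of these two vectors is then nonzero and proportional to $\bm w$. This yields
\[
\gamma^{2s} = \frac{-B(\gamma) + vC(\gamma)}{uB(\gamma) - vA(\gamma)},
\]
with both numerator and denominator nonzero (the sign conventions will be fixed in the write-up).

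\textbf{Height estimate.} Next, take heights place by place. At each place one has
\[
\max\{|{-B}+vC|, |uB - vA|\} \le (\text{coefficient sum}) \cdot \max(1,|u|)\max(1,|v|)\max(1,|\gamma|)^{s}.
\]
The archimedean places contribute the coefficient-size factor, while integrality of the coefficients controls the non-archimedean places. The factor $\log 2$ comes from adding two terms. Summing over places gives
\[
2s\,h(\gamma) \le s\,h(\gamma) + s\log(6\sqrt 3) + \log 2 + h(u) + h(v),
\]
and dividing by $s$ gives the stated bound.

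\textbf{Main obstacle.} I expect the hardest step to be obtaining the sharp archimedean coefficient bound $(6\sqrt3)^s$ in (ii). This constant arises from maximising $|x(1-x)|$-type quantities over a suitable contour in the integral representation of the Padé polynomials. The non-vanishing of the determinant in (iii) is also delicate. My first attempt would be to take the explicit Beukers–Tijdeman polynomials $A(x) = \sum_k \binom{2s+k}{k}\binom{s}{k}\cdots$ given via Rodrigues-type formulas. I would estimate their coefficients through the contour integral $\frac{1}{2\pi i}\oint \frac{(z(1-z))^{s}\cdots}{\cdots}\,dz$ and the saddle point at $|z(1-z)|$ maximal, and read off the determinant from the order of vanishing at $0$ and $1$.
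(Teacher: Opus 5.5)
\textbf{Verdict: right route, genuine gap in the quantitative core.} The paper gives no proof of this lemma; it imports it from Beukers--Tijdeman and Beukers--Evertse. Your skeleton is the standard Thue--Pad\'e route to such bounds, and most of it is sound. The elimination works, up to a sign: the cross product of $(u,v,-1)$ and $(A,B,-C)$ gives $\gamma^{2s}=(B-vC)/(uB-vA)$. The place-by-place estimate also has the right shape.

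The gap is that everything quantitative sits in (ii) and (iii), which you only announce, and as written the constants do not close. To reach exactly $2s\,h(\gamma)\le s\,h(\gamma)+s\log(6\sqrt3)+\log 2+h(u)+h(v)$, your route needs
\[
\operatorname{len}(A)+\operatorname{len}(B)\le 2(6\sqrt3)^s \quad\text{and}\quad \operatorname{len}(B)+\operatorname{len}(C)\le 2(6\sqrt3)^s .
\]
These must hold for \emph{both} triples, since which one you use depends on $(u,v)$, and for \emph{every} $s\ge1$. The $\log 2$ is already consumed by the triangle inequality. So an $O\big((6\sqrt3)^s\big)$ bound with implied constant $K$ only yields $\frac1s\log(2K)$, and nothing is left to absorb $K$.

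Similarly, ``high-order zeros at $0$ and $1$, low degree'' only fixes the \emph{shape} of the minor, not its non-vanishing. In fact $x^{2s}(A_1B_2-A_2B_1)=C_1B_2-C_2B_1$ has degree $\le 2s$, so the minor is a constant.

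\textbf{How to close it.} Differentiating $s+1$ times kills $C$, which gives $(x^{2s}A)^{(s+1)}=x^{s-1}(1-x)^{s-1}\ell(x)$ with $\ell$ linear. Conversely, with $\omega(t)=t^{s-1}(1-t)^{s-1}\ell(t)$,
\begin{equation*}
x^{2s}A(x)=\frac{1}{s!}\int_0^x (x-t)^s\omega(t)\,dt,\qquad (1-x)^{2s}B(x)=\frac{1}{s!}\int_x^1 (x-t)^s\omega(t)\,dt,
\end{equation*}
and $C(x)=\frac{1}{s!}\int_0^1 (x-t)^s\omega(t)\,dt$.

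\emph{Integer triples.} Take $\ell=1$ and multiply by $\frac{(3s-1)!}{(s-1)!^2}$; take $\ell=t$ and multiply by $\frac{(3s)!}{s!(s-1)!}$. Both give integer coefficients, each a product of two binomial coefficients. For example $C_1=\sum_j(-1)^j\binom{s-1+j}{j}\binom{3s-1}{s-j}x^{s-j}$. For $s=1$ the triples are $(1,-1,2x-1)$ and $(x,-2-x,3x-2)$.

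\emph{Non-vanishing.} The minor is constant, so it equals its value at $0$. Since $A_2(0)=0$, this is $A_1(0)B_2(0)=(-1)^s\binom{3s-1}{s-1}\binom{2s}{s}\neq 0$.

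\emph{Lengths.} The coefficients of $A_i$ and $C_i$ alternate in sign, and those of $B_i$ all have one sign. So each length is the absolute value at $\pm1$ of an explicit real integral, for instance
\[
\operatorname{len}(C_1)=\frac{(3s-1)!}{(s-1)!^2\,s!}\int_0^1(1+t)^s t^{s-1}(1-t)^{s-1}\,dt .
\]
This is where the constant comes from: $6\sqrt3=27\cdot\max_{[0,1]}t(1-t)(1+t)$. The $27^s$ is the growth of $(3s)!/(s!)^3$, and the factor $1+t$ comes from evaluating at $-1$. No contour or saddle point is needed. Maximising $t(1-t)$ alone would only give $27/4$, which is merely the growth rate of $\operatorname{len}(B_i)\in\{\binom{3s-1}{s-1},\binom{3s}{s}\}$.

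\emph{The explicit bound.} Using $\frac{(3s)!}{(s!)^3}\le\frac{\sqrt3}{2\pi s}27^s$ (Robbins' form of Stirling) and $\binom{3s}{s}\le(27/4)^s$, the required constant-free bounds follow for $s\ge2$. The case $s=1$ is checked by hand from the triples above. Until this is carried out, your argument proves only a weaker inequality than the stated one.
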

From this one quickly obtains the following
\begin{lemma}\label{fundamental lemma}
    Let $u,v,\gamma \in \overline \Q^*$ and let $k \in \N_{\geq 6}$ be such that
    \begin{equation*}
        u \gamma^k+v (1-\gamma)^k = 1.
    \end{equation*}
    Then
    \begin{equation*}
        h( \gamma) \leq \log(6 \sqrt{3}) + \frac{6 \log 6}{k-5} + \frac{2}{k-5}\big( h(u) + h(v) \big).
    \end{equation*}
\end{lemma}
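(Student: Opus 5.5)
The plan is to deduce the statement directly from Lemma \ref{Beukers Tijdemann lemma} by adjusting the parity of the exponent. Once that is done, what remains is elementary height bookkeeping followed by a rearrangement to isolate $h(\gamma)$. The inequalities used throughout are the standard ones: $h(xy)\le h(x)+h(y)$, $h(x^{-1})=h(x)$, and $h(1-\gamma)\le h(\gamma)+\log 2$.

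\emph{Even case.} If $k=2s$ is even, Lemma \ref{Beukers Tijdemann lemma} applies verbatim with $s=k/2$. It gives
\begin{equation*}
h(\gamma)\le \log(6\sqrt3)+\frac{2\log 2}{k}+\frac{2}{k}\big(h(u)+h(v)\big).
\end{equation*}
Since $k\ge 6$, one has $\frac{2}{k}\le \frac{2}{k-5}$ and $2\log 2/k\le 6\log 6/(k-5)$. The claimed inequality therefore follows immediately.

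\emph{Odd case.} If $k$ is odd, I absorb one factor of $\gamma$ and one of $1-\gamma$ into the coefficients. Write $k=2s\pm1$ and set
\begin{equation*}
u':=u\gamma^{\pm1},\qquad v':=v(1-\gamma)^{\pm1}.
\end{equation*}
Then $u'\gamma^{2s}+v'(1-\gamma)^{2s}=1$. The coefficients satisfy
\begin{equation*}
h(u')+h(v')\le h(u)+h(v)+2h(\gamma)+\log 2.
\end{equation*}
Applying Lemma \ref{Beukers Tijdemann lemma} and moving the $2h(\gamma)/s$ term to the left-hand side gives
\begin{equation*}
\Big(1-\frac{2}{s}\Big)h(\gamma)\le \log(6\sqrt3)+\frac{2\log2+h(u)+h(v)}{s}.
\end{equation*}
Dividing by $(s-2)/s$ then yields
\begin{equation*}
h(\gamma)\le \log(6\sqrt3)+\frac{2\log(6\sqrt3)+2\log 2}{s-2}+\frac{h(u)+h(v)}{s-2}.
\end{equation*}
This is meaningful as soon as $s\ge 3$, which the hypothesis $k\ge 6$ guarantees. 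With the choice $s=(k+1)/2$ one gets $s-2=(k-3)/2$. Hence the coefficient of $h(u)+h(v)$ is $2/(k-3)\le 2/(k-5)$, which is as required. The constant term becomes
\begin{equation*}
\frac{4\log(6\sqrt3)+4\log2}{k-3}=\frac{\log(6^6\cdot 4)}{k-3}.
\end{equation*}

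\emph{Expected obstacle.} The main difficulty is matching the constant $6\log 6/(k-5)$ exactly. The crude bound above, $\log(6^6\cdot4)/(k-3)$, beats $6\log6/(k-5)$ only for small $k$, roughly $k\le 20$. It does not beat it uniformly. For the remaining range I would first try to recover the missing $\log 4$. There are three places to look:
\begin{itemize}
\item a sharper treatment of $h(\gamma)+h(1-\gamma)$, using the place-by-place estimate so that only one $\log 2$ is lost rather than two;
\item choosing between $s=(k\pm1)/2$ to trade the loss in the coefficient against the loss in the constant;
\item using a slightly stronger intermediate form of the Beukers--Tijdeman estimate in which the $\log 2/s$ term is absorbed.
\end{itemize}
If none of these reaches $6\log 6$ on the nose, I would fall back on the verified weaker constant and check that it suffices for the later applications. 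Apart from this constant-tracking, the argument is routine.
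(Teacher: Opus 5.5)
\textbf{Verdict: genuine gap.} Your even case is correct, and so is your odd-case bookkeeping, but the proposal does not prove the lemma. For odd $k\ge 21$ and small $h(u)+h(v)$, the bound $\log(6\sqrt3)+\frac{\log(4\cdot 6^6)+2(h(u)+h(v))}{k-3}$ exceeds the stated one. Retreating to a weaker constant proves a different statement, since the $6\log 6$ is carried verbatim into $f_{\theta,\delta}$. None of the remedies you list closes this:
\begin{itemize}
\item No place-by-place argument can improve $h(1-\gamma)\le h(\gamma)+\log 2$ uniformly. For $\gamma=-(m+i)/(m-i)$ with $m$ even, one has $h(\gamma)=\tfrac12\log(m^2+1)$ and $h(1-\gamma)=h\big(2m/(m-i)\big)=\log(2m)$. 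So the loss tends to $\log 2$ while $h(\gamma)\to\infty$.
\item Taking $s=(k-1)/2$ only replaces $k-3$ by $k-5$ in your denominator, which is worse.
\item A sharper Beukers--Tijdeman estimate is not available from the paper.
\end{itemize}

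The missing idea is the symmetry $\gamma\leftrightarrow 1-\gamma$, $u'\leftrightarrow v'$. Assume $\gamma\neq 1$ (if $\gamma=1$ then $h(\gamma)=0$ and there is nothing to prove). Your identity $u'\gamma^{2s}+v'(1-\gamma)^{2s}=1$ is also an instance of Lemma \ref{Beukers Tijdemann lemma} with unknown $1-\gamma$ and coefficients $v',u'$. Hence $h(1-\gamma)$ obeys the same bound as $h(\gamma)$. Put $M:=\max\{h(\gamma),h(1-\gamma)\}$. Then $h(u')+h(v')\le h(u)+h(v)+2M$ with no $\log 2$ lost, whence
\begin{equation*}
\Big(1-\frac2s\Big)M\le\log(6\sqrt3)+\frac{\log2+h(u)+h(v)}{s},
\end{equation*}
that is, $M\le\log(6\sqrt3)+\frac{2\log(6\sqrt3)+\log2+h(u)+h(v)}{s-2}$. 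Since $4\log(6\sqrt3)+2\log 2=6\log 6$, your choice $s=(k+1)/2$ gives
\begin{equation*}
h(\gamma)\le\log(6\sqrt3)+\frac{6\log 6+2(h(u)+h(v))}{k-3},
\end{equation*}
which is stronger than claimed. The choice $s=(k-1)/2$ gives the claim exactly.

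Note that the paper's own proof uses $s=(k-1)/2$ and $u'=u\gamma$, but writes $v'=v\gamma$ instead of $v(1-\gamma)$. That slip is precisely where the $\log 2$ you tracked disappears, so the written argument has the same gap. The symmetric application of the lemma repairs it.
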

\begin{proof}
Let
\begin{equation*}
    u':= 
\begin{cases}
    u\gamma : k \equiv 1 \mod 2 \\
    u : k \equiv 0 \mod 2
\end{cases}
,\; v' :=
\begin{cases}
    v\gamma : k \equiv 1 \mod 2 \\
    v : k \equiv 0 \mod 2
\end{cases}
, \; s:=
\begin{cases}
    \frac{k-1}{2} : k \equiv 1 \mod 2 \\
    \frac{k}{2} : k \equiv 0 \mod 2.
\end{cases}
\end{equation*}
Then by Lemma \ref{Beukers Tijdemann lemma},
\begin{align}
    h(\gamma) &\leq \frac{1}{s} \log 2 + \log(6 \sqrt{3}) + \frac{1}{s} \big( h(u') + h(v') \big) \\
    &\leq \frac{2}{k-1} \log 2 + \log(6 \sqrt{3}) + \frac{2}{k-1} \big(2h(\gamma) + h(u) + h(v) \big).
\end{align}
Equivalently,
\begin{equation*}
    \frac{k-5}{k-1} \cdot h(\gamma) \leq \log(6 \sqrt{3}) + \frac{2 \log 2}{k-1} + \frac{2}{k-1} \big( h(u) + h(v) \big).
\end{equation*}
Brief algebraic manipulation therefore yields the claim.
\end{proof}
The final preliminary lemma concerns a height inequality of general utility (cf. \cite[Chap. 14, Lemma 14.10]{Masser2016} where it is given in terms of the multiplicative Weil height \(H:=\exp h\)). Below, given a polynomial $P \in \Z[X_1,...,X_N]$ one writes $\operatorname{len}(P)$ for the sum of the absolute values of the coefficients of $P$.

\begin{lemma}\label{heights lemma}
Given $P,Q \in \Z[X_1,\dots,X_N]$ of degree at most $L_i \in \N_{\geq 0}$ in $X_i$ for each $1 \leq i \leq N$, and non-zero algebraic numbers $\xi_1, \dots, \xi_N$ with $Q(\xi_1, \dots, \xi_N)$ non-zero, one has
\begin{equation*}
    h\left(\frac{P(\xi_1,\dots,\xi_N)}{Q(\xi_1, \dots, \xi_N)} \right) \leq \log \max\big\{\operatorname{len}(P); \operatorname{len}(Q)\big\} + L_1 h(\xi_1) +\dots+ L_N h(\xi_N).
\end{equation*}
\end{lemma}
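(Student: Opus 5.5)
We prove Lemma \ref{heights lemma} by working place by place with the product formula. Let $K$ be a number field containing $\xi_1,\dots,\xi_N$. Put $p := P(\xi_1,\dots,\xi_N)$ and $q := Q(\xi_1,\dots,\xi_N) \neq 0$. If $p = 0$ the left-hand side is $h(0)$, which by the usual convention is $0$ (or the case is excluded), so assume $p \neq 0$. Using the standard identity $h(p/q) = h([p:q])$ for the projective height of the point $[p:q]\in\mathbb{P}^1$, we have
\begin{equation*}
    h\left(\frac{p}{q}\right) = \sum_{w \in M_K} \frac{[K_w:\Q_w]}{[K:\Q]} \log \max\{|p|_w, |q|_w\}.
\end{equation*}
It therefore suffices to bound $\max\{|p|_w,|q|_w\}$ at each place $w$ and then sum.

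\emph{Archimedean places.} Write $P=\sum a_{\bm i} \bm X^{\bm i}$ with $0\le i_j\le L_j$. The triangle inequality gives
\begin{equation*}
|p|_w \leq \sum |a_{\bm i}| \prod_j |\xi_j|_w^{i_j} \leq \operatorname{len}(P)\prod_j \max\{1,|\xi_j|_w\}^{L_j}.
\end{equation*}
The same bound holds for $q$ with $\operatorname{len}(Q)$. Taking the maximum,
\begin{equation*}
\max\{|p|_w,|q|_w\}\le \max\{\operatorname{len}(P),\operatorname{len}(Q)\}\prod_j \max\{1,|\xi_j|_w\}^{L_j}.
\end{equation*}

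\emph{Non-archimedean places.} The integer coefficients satisfy $|a_{\bm i}|_w\le 1$, so the ultrametric inequality gives $|p|_w \le \prod_j \max\{1,|\xi_j|_w\}^{L_j}$, and likewise for $q$.

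Now take logarithms and sum with the local weights. The archimedean weights sum to $1$, so the constant term contributes exactly $\log\max\{\operatorname{len}(P),\operatorname{len}(Q)\}$. The remaining factors sum to
\begin{equation*}
\sum_j L_j \sum_w \frac{[K_w:\Q_w]}{[K:\Q]}\log^+|\xi_j|_w = \sum_j L_j h(\xi_j),
\end{equation*}
which is the definition of the Weil height. This yields the stated inequality.

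The only delicate point is the normalisation: $h(p/q)$ must equal the projective height of $[p:q]$. This holds because the product formula makes $\sum_w \frac{[K_w:\Q_w]}{[K:\Q]}\log|q|_w = 0$, so dividing through by $q$ does not change the sum. Everything else is a routine application of the triangle and ultrametric inequalities. Equivalently, one may cite \cite[Lemma 14.10]{Masser2016} and take logarithms.
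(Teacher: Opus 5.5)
Your proof is correct. It is the standard argument: bound $\max\{|p|_w,|q|_w\}$ place by place (triangle inequality at archimedean places, ultrametric inequality at finite ones) and sum using $h(p/q)=h([p:q])$. The paper gives no proof of its own and simply cites Masser's Lemma 14.10, which is the multiplicative form of the same estimate and proved the same way, so your route is essentially the paper's.
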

\subsection{Bounding above gaps between solutions}\label{section establishing upper bound}
Recalling the definition (\ref{f function definition}) of $f_{\theta, \delta}$ one finds via an elementary application of the quotient rule that
\begin{align*}
    f'_{\theta, \delta}(t) 
    &= -\frac{(\log(6\sqrt{3} + 5\theta)(6-3\delta)t - (5 \log (5\sqrt{3}) - 6\log 6 + \theta)(1-9\delta)}{\big[(1-9\delta)t - (6-3\delta) \big]^2} \\
    &< -\frac{2\log 6 - 2\log 3 + 4\theta}{\big[(1-9\delta)t - (6-3\delta) \big]^2} \\
    &< 0.
\end{align*}
Therefore $f_{\theta, \delta}$ is strictly decreasing.
\begin{lemma}\label{upper bound on gaps}
Let $t > (6-3\delta)/(1-9\delta)$ and let $m,n$ be integers satisfying
\begin{itemize}
    \item $m > n \geq f_{\theta, \delta}(t)/h(\bm \beta)$
    \item $\bm F(n) \in \Pi$ and $ \bm F(m) \in \Pi$.
\end{itemize}
Then,
\begin{equation*}
    \operatorname{round}\left(\frac{m}{n}\right) < t.
\end{equation*}
\end{lemma}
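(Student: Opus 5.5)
The plan is to argue by contradiction: suppose $k:=\operatorname{round}(m/n)\geq t$. From the two solutions $\bm F(n),\bm F(m)\in\Pi$ I will build an equation of the shape $u\gamma^k+v(1-\gamma)^k=1$ and apply Lemma \ref{fundamental lemma}. This gives an upper bound for $h(\gamma)$ of size roughly $\log(6\sqrt3)+O\big((h(u)+h(v))/k\big)$. It will be played against a lower bound $h(\gamma)\gtrsim n(1-\delta)h(\bm\beta)$, which follows directly from the hypothesis on $\psi$. Since $t>(6-3\delta)/(1-9\delta)>6$, the requirement $k\geq 6$ of Lemma \ref{fundamental lemma} is automatic.

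\textbf{Construction.} Write $\bm F(n)=\bm\xi\bm\beta^n$ and $\bm F(m)=\bm\eta\bm\beta^m$, with $h(\bm\xi)=\psi(n)$ and $h(\bm\eta)=\psi(m)$. Set $\gamma:=\xi_1\beta_1^n$. Because $\bm F(n)\in\Pi$, one has $1-\gamma=\xi_2\beta_2^n$, and both are non-zero since $\bm F(n)\in\Gm^2$. Write $m=kn+r$; by the rounding convention $|r|\leq n/2$, and also $m<(k+\tfrac12)n$. Then
\begin{equation*}
\eta_i\beta_i^m=\eta_i\,\xi_i^{-k}\,\beta_i^{r}\,(\xi_i\beta_i^n)^k \qquad (i=1,2).
\end{equation*}
Put $u:=\eta_1\xi_1^{-k}\beta_1^r$ and $v:=\eta_2\xi_2^{-k}\beta_2^r$. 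The condition $\bm F(m)\in\Pi$ then reads exactly $u\gamma^k+v(1-\gamma)^k=1$. The elementary height inequalities $h(xy)\leq h(x)+h(y)$ and $h(x^a)=|a|h(x)$ give
\begin{equation*}
h(u),\,h(v)\leq \psi(m)+k\psi(n)+\tfrac{n}{2}h(\bm\beta)\leq (k+1)\theta+n h(\bm\beta)\Big(\delta(2k+\tfrac12)+\tfrac12\Big).
\end{equation*}

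\textbf{Lower bound and comparison.} Without loss of generality $h(\bm\beta)=h(\beta_1)$; otherwise exchange the roles of $\gamma$ and $1-\gamma$, using $h(1-\gamma)\leq h(\gamma)+\log 2$. Then $h(\gamma)\geq n h(\bm\beta)-\psi(n)\geq n(1-\delta)h(\bm\beta)-\theta$. Inserting both estimates into Lemma \ref{fundamental lemma} and multiplying by $k-5$ yields a linear inequality of the form
\begin{equation*}
n h(\bm\beta)\big[(1-9\delta)k-c(\delta)\big]\leq (k-5)\log(6\sqrt3)+6\log6+(5k-1)\theta .
\end{equation*}
The aim is to obtain $c(\delta)=6-3\delta$, so that the inequality says precisely $n h(\bm\beta)<f_{\theta,\delta}(k)$ whenever $k>(6-3\delta)/(1-9\delta)$. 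Since $f_{\theta,\delta}$ is strictly decreasing (as shown just above the lemma) and $k\geq t$, this gives $n<f_{\theta,\delta}(t)/h(\bm\beta)$, contradicting the hypothesis on $n$.

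\textbf{Main obstacle.} The main difficulty is the exact bookkeeping of constants in the comparison step. The crude estimates above give $c(\delta)=7-3\delta$ and a numerator carrying stray $\log 2$ and $\theta$ terms, rather than exactly $f_{\theta,\delta}$. To sharpen this, I would first try a more economical choice of exponent bookkeeping: place $\beta_i^r$ and the powers of $\xi_i$ asymmetrically, and use $\psi(m)\leq\theta+m\delta h(\bm\beta)$ with the exact relation $m=kn+r$ instead of the bound $m<(k+\tfrac12)n$. I would also bound $h(u)+h(v)$ jointly, via the single vector height $h(\bm\xi)$, instead of adding two separate bounds. If the stated constants still cannot be matched, the same argument gives the lemma with a slightly worse explicit function, and the structure of the proof is unchanged.
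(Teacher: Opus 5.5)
Your construction is the paper's own: the same $\gamma=\xi_1\beta_1^n$, $u$ and $v$, the decomposition $m=kn+r$ with $|r|\le n/2$, Lemma~\ref{fundamental lemma}, the lower bound $h(\gamma)\ge nh(\bm\beta)-\psi(n)$, the estimate $m<(k+\tfrac12)n$, and the monotonicity of $f_{\theta,\delta}$. As written, however, the proposal does not prove the lemma as stated: you reach $c(\delta)=7-3\delta$ and stop at a list of possible sharpenings. Your bookkeeping is correct, and cleaner than you think. Carried through, it gives exactly
\[
nh(\bm\beta)\big[(1-9\delta)k-(7-3\delta)\big]<(k-5)\log(6\sqrt3)+6\log 6+(5k-1)\theta .
\]
There are no stray $\theta$ terms. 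There is no $\log 2$ either, provided you reduce to $h(\bm\beta)=h(\beta_1)$ by relabelling the coordinates rather than via $h(1-\gamma)\le h(\gamma)+\log 2$; the relabelling is free because the equation is symmetric under $(u,\gamma)\leftrightarrow(v,1-\gamma)$.

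The paper obtains $6-3\delta$ at precisely this point, by writing the output of Lemma~\ref{fundamental lemma} as $\frac{2}{k-5}\big(2k\psi(n)+2\psi(m)+\tfrac n2 h(\bm\beta)\big)$. There the $\psi$-terms are counted for both $u$ and $v$, but the factors $\beta_1^r,\beta_2^r$ are charged only $\tfrac n2 h(\bm\beta)$ in total. In fact $h(\beta_1^r)+h(\beta_2^r)=|r|\big(h(\beta_1)+h(\beta_2)\big)$, which can equal $nh(\bm\beta)$ (take $h(\beta_1)=h(\beta_2)$ and $|r|=n/2$). So your $nh(\bm\beta)$ is the justified bound, and it is exactly what turns $k-6$ into $k-7$.

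None of your proposed sharpenings recovers the factor $2$:
\begin{itemize}
\item Shuffling the factors $\xi_i^{\pm k},\beta_i^{r}$ does not change $u=F_1(m)\gamma^{-k}$ and $v=F_2(m)(1-\gamma)^{-k}$, which must carry $\beta_1^r$ and $\beta_2^r$ respectively.
\item $|r|\le n/2$ is already optimal.
\item Bounding through the vector height $h(\bm\xi)$ is what you already do.
\item Using $m=kn+r$ exactly in $\psi(m)$ only affects $\delta$-terms, and gains nothing when $r$ is near $n/2$.
\end{itemize}
So the gap relative to the statement is genuine, but it is in the paper's proof as well; it is not an idea you missed.

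What this argument actually establishes is the lemma with $6-3\delta$ replaced by $7-3\delta$, both in the denominator of $f_{\theta,\delta}$ and in the hypothesis on $t$. The modified function is still strictly decreasing, so your contradiction step goes through verbatim. However, $I_{\delta,X}$ and the numerical constants built on it (e.g.\ in Corollary~\ref{corollary to main theorem}) shift accordingly.
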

\begin{proof}
If $k:= \operatorname{round}(m/n)$ is in the integer interval $\llbracket 1, 6 \rrbracket$ then trivially $k < t$. Hence, one may assume that $k \geq 7$. Now decompose $m$ as
\begin{equation*}
    m = kn +l \quad ; \quad l \in \llbracket -n/2, n/2 \rrparenthesis.
\end{equation*}
Then with $\bm c(q) := \bm F(q) \bm \beta^{-q}$ for all $q \in \N_0$, one has the equations
\begin{align*}
    c_1(n) \beta_1^n &+ c_2(n) \beta_2^n = 1, \\
    c_1(m)\beta_1^l \cdot \beta_1^{nk} &+ c_2(m) \beta_2^l \cdot \beta_2^{nk} = 1.
\end{align*}
Substituting the first equation into the second and setting
\begin{equation*}
    u:= c_1(m) c_1(n)^{-k} \beta_1^l, \quad v:= c_2(m) c_2(n)^{-k} \beta_2^l, \quad \gamma:= c_1(n) \beta_1^n,
\end{equation*}
gives
\begin{equation*}
    u\gamma^k + v (1-\gamma)^k = 1.
\end{equation*}
Since $k > 6$, Lemma \ref{fundamental lemma} applies, and it therefore holds that
\begin{equation}
    h\big(c_1(n) \beta_1^n \big) \leq \log(6\sqrt{3}) + \frac{6 \log 6}{k-5} + \frac{2}{k-5}\big(2k \psi(n) + 2 \psi(m) + \frac{n}{2} h(\bm \beta) \big).
\end{equation}
Without loss of generality, assume $h(\beta_1) \geq h(\beta_2)$. Then,
\begin{equation*}
    nh(\bm \beta) = h\big(c_1(n)^{-1} \cdot c_1(n) \beta_1^n \big) \leq \psi(n) + h \big( c_1(n) \beta_1^n \big),
\end{equation*}
which implies
\begin{equation*}
    \frac{k-6}{k-5} \cdot nh(\bm \beta) \leq \log(6\sqrt{3}) + \frac{6 \log 6}{k-5} + \frac{5k - 5}{k-5}\cdot \psi(n) + \frac{4}{k-5}\cdot \psi(m). 
\end{equation*}
Since one has
\begin{equation*}
    m < \left( k + \frac{1}{2} \right) n,
\end{equation*}
applying the definition of $\psi$ and carrying out simple algebraic manipulations yields
\begin{equation*}
    nh(\bm\beta) < f_{\theta,\delta}(k).
\end{equation*}
Therefore, the inequalities
\begin{equation*}
    f_{\theta, \delta}(t) \leq n h(\bm \beta) < f_{\theta,\delta}(k)
\end{equation*}
hold. Finally, since $f_{\theta, \delta}$ is strictly decreasing, it follows that $k < t$.
\end{proof}
\subsection{Lower bounds on gaps, and completion of the proof}\label{section establishing lower bound and completing proof}
To complement Lemma \ref{upper bound on gaps}, a lower bound on gaps between solutions to $\bm F(n) \in \Pi$ is obtained using the basic property of heights given by Lemma \ref{heights lemma}.
\begin{lemma}\label{lower bound on gaps}
Let $m,n$ be natural numbers satisfying:
\begin{itemize}
    \item $m > n > \frac{\log2 + 4 \theta}{1-4 \delta}\cdot \frac{1}{h(\bm \beta)}$
    \item $\bm F(m) \neq \bm F(n)$
    \item $\bm F(m) \in \Pi$ and $\bm F(n) \in \Pi$.
\end{itemize}
Then,
\begin{equation*}
     m \geq \frac{3n}{2(1+\delta)} \left[1 - \frac{2\delta}{3} - \frac{\log 2 + 4\theta}{3 h(\bm \beta)n}\right].
\end{equation*}
\end{lemma}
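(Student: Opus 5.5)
The plan is to repeat the elimination argument from the proof of Theorem \ref{order 3 pseudo-effectivity theorem}, now keeping track of the perturbations through $\psi$. Put $\bm c(q) := \bm F(q)\bm\beta^{-q}$, so that $h(c_i(q)) \leq \psi(q) \leq \theta + q\delta h(\bm\beta)$ for $i=1,2$. Set $r := m-n > 0$ and, without loss of generality, assume $h(\beta_1) = h(\bm\beta)$. The two memberships $\bm F(n),\bm F(m) \in \Pi$ read
\begin{align*}
c_1(n)\,\beta_1^n + c_2(n)\,\beta_2^n &= 1,\\
c_1(m)\beta_1^r\,\beta_1^n + c_2(m)\beta_2^r\,\beta_2^n &= 1.
\end{align*}
I will view these as a linear system in the unknowns $x=\beta_1^n$ and $y=\beta_2^n$. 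Its determinant is
\begin{equation*}
\Delta := c_1(n)c_2(m)\beta_2^r - c_2(n)c_1(m)\beta_1^r .
\end{equation*}

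The first step is to show $\Delta \neq 0$. If $\Delta = 0$, the two coefficient rows are proportional. Since both right-hand sides equal $1$ and $x,y$ solve both equations, the proportionality constant must be $1$. This gives $\bm c(n) = \bm c(m)\bm\beta^r$, that is $\bm F(n)\bm\beta^{-n} = \bm F(m)\bm\beta^{-n}$, hence $\bm F(n) = \bm F(m)$, which the hypotheses exclude.

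With $\Delta \neq 0$, Cramer's rule gives
\begin{equation*}
\beta_1^n = \frac{c_2(m)\beta_2^r - c_2(n)}{\Delta}.
\end{equation*}
I then apply Lemma \ref{heights lemma} with the variables $c_1(n),c_2(n),c_1(m),c_2(m),\beta_1,\beta_2$. Here $P = X_4X_6^r - X_2$ and $Q = X_1X_4X_6^r - X_2X_3X_5^r$. Each $c$-variable has degree at most $1$, each $\beta$-variable has degree at most $r$, and both lengths equal $2$. This should give
\begin{equation*}
n h(\bm\beta) = h(\beta_1^n) \leq \log 2 + 2\psi(n) + 2\psi(m) + r\big(h(\beta_1)+h(\beta_2)\big) \leq \log 2 + 4\theta + 2\delta(n+m)h(\bm\beta) + 2(m-n)h(\bm\beta).
\end{equation*}

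The final step is algebra. Rearranging gives
\begin{equation*}
3n h(\bm\beta) \leq \log 2 + 4\theta + 2\delta n h(\bm\beta) + 2(1+\delta)m h(\bm\beta).
\end{equation*}
Dividing by $2(1+\delta)h(\bm\beta)$ yields exactly the claimed lower bound for $m$. The lower bound on $n$ in the hypotheses seems only to make the bracket in the conclusion meaningful (or positive for later use); the derivation above does not appear to need it.

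The main point of care is the bookkeeping in Lemma \ref{heights lemma}. The powers $\beta_i^r$ must be charged as degree $r$ in the variables $\beta_i$, rather than through the heights of $\bm\beta^r$ and $\bm\beta^n$ separately. The four perturbation heights must each be charged once via $\psi$. Any looser accounting would spoil the constant $3/(2(1+\delta))$.
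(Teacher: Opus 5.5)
Your proposal is correct and follows the paper's proof essentially line by line. It uses the same elimination (your Cramer's rule step) to write $\beta_1^n$ as a quotient, the same non-vanishing argument via $\bm F(m)\neq\bm F(n)$, and the same application of Lemma~\ref{heights lemma} followed by rearrangement. You also rightly make explicit the normalisation $h(\beta_1)=h(\bm\beta)$, which the paper uses only implicitly, and rightly note that the lower bound on $n$ is not needed here; it only ensures the growth ratio exceeds $1$ in the proof of Theorem~\ref{main theorem hard}.
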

\begin{proof}
With $\bm c(q) := \bm F(q)\bm \beta^{-q}$ for all $q \in \N_0$, let $r:= m-n >0$, such that
\begin{align*}
    c_1(n) \beta_1^n &+ c_2(n) \beta_2^n = 1, \\
    c_1(m) \beta_1^r \cdot \beta_1^n &+ c_2(m) \beta_2^r \cdot \beta_2^n = 1.
\end{align*}
Eliminating $\beta_2^n$ terms gives
\begin{equation*}
    \big( c_1(n) c_2(m) \beta_2^r - c_1(m) c_2(n) \beta_1^r \big) \cdot \beta_1^n = c_2(m)\beta_2^r - c_2(n).
\end{equation*}
Assuming that both sides of the equation vanish simultaneously implies that $c_2(n) = c_2(m) \beta_2^r$, which quickly implies that $c_1(n) = c_1(m) \beta_1^r$, and therefore
\begin{equation*}
\bm F(n) = \bm c(n) \bm \beta^n = \bm c(m) \bm \beta^r \bm \beta^n = \bm c(m) \bm \beta^m = \bm F(m).
\end{equation*}
Therefore neither side of the above equation vanishes identically, and it follows that
\begin{align*}
    n h(\bm \beta) &= h \left(\frac{c_2(m) \beta_2^r - c_2(n)}{c_1(n)c_2(m) \beta_2^r - c_1(m) c_2(n)\beta_1^r} \right) \\
    &\leq \log 2 + 2 \psi(n) + 2 \psi(m) +2 rh(\bm \beta) \\
    &\leq \log2 + 4\theta +4\delta h(\bm \beta) n + 2(1+\delta) h(\bm \beta) r,
\end{align*}
where Lemma \ref{heights lemma} has been applied between the first and the second line of the above, and the definition of $\psi$ has been applied between the second and third line. Rearranging this inequality and recalling $m = n + r$ yields the stated inequality that was to be proved.
\end{proof}
Theorem \ref{main theorem hard} now follows almost immediately from Lemmas \ref{upper bound on gaps} and \ref{lower bound on gaps}.
\begin{proof}[Proof of Theorem \ref{main theorem hard}]
Let $t,s,X$ be real numbers satisfying
\begin{equation*}
    t \in I_{\delta,X} ,\quad  \quad s > \frac{\log 2 + 4\theta}{1-4\delta}, \quad X > \frac{\log \left( \frac{6-3\delta}{1-9\delta} + \frac{1}{2} \right)}{\log \left(\frac{3-2\delta}{2(1+\delta)}\right)},
\end{equation*}
and further set
\begin{equation*}
    \nu := \max \big\{ f_{\theta, \delta}(t) ; s \big\}/h(\bm \beta).
\end{equation*}
Now suppose there exist integers
\begin{equation*}
    n_T > n_{T-1} > ... > n_1 \geq \nu
\end{equation*}
such that
\begin{itemize}
    \item $\bm F(n_i) \neq \bm F(n_j)$ for all $1 \leq i < j \leq T$
    \item $\bm F(n_i) \in \Pi$ for all $1 \leq i \leq T$.
\end{itemize}
Then, on the one hand, Lemma \ref{upper bound on gaps} implies
\begin{equation*}
     \operatorname{round}\left(\frac{n_T}{n_1} \right) \leq t \quad \implies \quad n_T < \left( t + \frac{1}{2} \right) n_1,
\end{equation*}
and on the other hand, Lemma \ref{lower bound on gaps} implies
\begin{equation*}
    \forall \; 1 \leq i < T : \quad n_{i+1} \geq \frac{3n_i}{2(1+\delta)} \left[1 - \frac{2\delta}{3} - \frac{\log 2 + 4\theta}{3 s} \right].
\end{equation*}
Therefore,
\begin{equation*}
    n_1 \cdot \left[ \frac{3}{2(1+\delta)} \left(1 - \frac{2\delta}{3} - \frac{\log 2 + 4\theta}{3s} \right)\right]^{T-1} \leq n_T < \left(t + \frac{1}{2}\right) \cdot n_1,
\end{equation*}
yielding
\begin{equation*}
    T < 1 + \frac{\log \left(t +\frac{1}{2}\right)}{\log \left[ \frac{3}{2(1+\delta)} \left(1 - \frac{2\delta}{3} - \frac{\log 2 + 4\theta}{3 s} \right)\right]} \cdot
\end{equation*}
Finally, by setting
\begin{equation*}
    s = g_{\theta, \delta, X}(t),
\end{equation*}
it quickly follows from the definition (\ref{g function definition}) of $g_{\theta,\delta, X}$ that $T < 1 + X$, and one further has
\begin{equation*}
    \nu = \max \big\{ f_{\theta,\delta}(t); g_{\theta, \delta, X}(t)\big\}.
\end{equation*}
By taking the minimum of this threshold over all $t \in I_{\delta, X}$, and observing that $T$ is an integer, one obtains
\begin{equation*}
    T \leq \lceil X \rceil \quad \text{and} \quad \nu = \min_{t \in I_{\delta, X}} \max \big\{ f_{\theta,\delta}(t); g_{\theta, \delta, X}(t)\big\}.
\end{equation*}
This concludes the proof.
\end{proof}
\section*{References}
\bibliographystyle{siam}
\renewcommand\refname{\vspace{-6ex}}

\begingroup
\footnotesize
\bibliography{ref}
\endgroup

\end{document}